\title{Almost-K\"ahler Anti-Self-Dual metrics}
\begin{document}
\author{Inyoung Kim}
\address{The Center for Geometry and its Applications, 
Pohang University of Science and Technology,
Pohang city, 790-784, South Korea}
\email{kiysd@postech.ac.kr}

\thanks{The research of the author was supported in part by SRC-GAIA, the Grant 2011-0030044 from The Ministry of Education, The Republic of Korea.}
\maketitle

  \begin{abstract}
We show the existence of strictly almost-K\"ahler anti-self-dual metrics on certain 4-manifolds by deforming scalar-flat K\"ahler metrics.
  On the other hand, we prove the non-existence of such metrics on certain other 4-manifolds by means of Seiberg-Witten theory. In the process, 
we provide a simple new proof of the fact that any almost-K\"ahler anti-self-dual 4-manifold must have a non-trivial Seiberg-Witten invariant.

\end{abstract}
\maketitle
\section{\large\textbf{Introduction}}\label{S:Intro}
On a smooth, oriented riemannian 4-manifold, the bundle of 2-forms $\Lambda^{2}$ decomposes as self-dual and anti-self-dual 2-forms 
\begin{equation}
\Lambda^{2}=\Lambda^{+}\oplus \Lambda^{-},
\end{equation}
where $\Lambda^{\pm}$ are $(\pm 1)$- eigenspaces of the Hodge-star operator.
In terms of this decomposition,  the riemannian curvature operator $R:\Lambda^{2}\to\Lambda^{2}$ takes the form,

\[
 R=
\begin{pmatrix}
 \begin{array}{c|c}
 W_{+}+\frac{s}{12}& \mathring{r}\\
 \hline
  \mathring{r}&W_{-}+\frac{s}{12}
 \end{array}
 \end{pmatrix}.
 \]
If $W_{+}=0$, the the riemannian metric is said anti-self-dual (ASD). 
Anti-self-dual metrics are hard to construct, but most generally, Taubes showed that 
$M\#n\overline{{\mathbb{CP}}^{2}}$ admits such a metric for all sufficiently large $n$, where $M$ is any smooth compact, oriented 4-manifold [26].

On the other hand, when a 4-manifold admits an additional structure such as a complex structure or a symplectic structure, it is natural to think of compatible metrics.
More precisely, let $(M,\omega)$ be a symplectic 4-manifold.
The symplectic form  $\omega$ is a closed and nondegenerate 2-form. 
By this, we mean $d\omega=0$, and for each $x\in M$ and nonzero $v\in T_{x}M$,  there exists $w\in T_{x}M$ such that 
$\omega(v,w)\neq 0$. 
A smooth fiber-wise linear map $J: TM\to TM$ on $M$ is called an almost-complex structure when $J$ satisfies $J^{2}=-1$.
We say $\omega$ is compatible with $J$ if 
\[\omega(Jv_{1}, Jv_{2})=\omega(v_{1}, v_{2})\]
for each $x\in M$ and $v_{1}, v_{2}\in T_{x}M$ and $\omega(v, Jv)>0$ for all nonzero $v\in T_{x}M$. 
It is known that the space of almost-complex structures which are compatible with $\omega$ is nonempty and contractible [19]. 
Given such a compatible $J$, we define the associated metric by
\[g(v,w)=\omega(v,Jw).\]
Then $g$ is a positive symmetric bilinear form and $J$ is compatible with $g$, that is, $g(v,w)=g(Jv, Jw)$. 
When such a $g$ is anti-self-dual, we call $g$ an almost-K\"ahler anti-self-dual metric.

When $J$ is integrable,  $(g,\omega, J)$ is said to be a K\"ahler structure. 
A K\"ahler metric on complex surfaces is scalar-flat if and only if it is anti-self-dual [17].
Thus, scalar-flat K\"ahler metrics provides one of the important examples of ASD metrics.  

The main topic of this article is to study an almost-K\"ahler anti-self-dual metric on a smooth, compact 4-manifold. 
We call $g$ a strictly almost-K\"ahler ASD metric when it is not scalar-flat K\"ahler. 
We show the existence of strictly almost-K\"ahler ASD metrics by deforming scalar-flat K\"ahler metrics. 
Many examples of scalar-flat K\"ahler metrics have been constructed by Kim, LeBrun, Pontecorvo [10], [13] and Rollin, Singer [24].
The deformation theory of such metrics has been studied [17]. 
Fortunately, twistor theory provides a transparent interpretation of deformation of ASD metrics in terms of Kodaira-Spencer theory of deformations of complex structure. 

Conversely, we will give a complete classification (up to diffeomorphism) of the smooth, compact oriented 4-manifolds 
which admit both almost-K\"ahler anti-self-dual metrics and metrics of positive scalar curvature. 
The main tool is the Seiberg-Witten invariant. 
In particular, we show that an almost-K\"ahler anti-self-dual 4-manifold has a unique solution of the Seiberg-Witten equation 
for an explicit perturbation form. 
Taubes already showed that the existence of the solution of the Seiberg-Witten equation for an almost-K\"ahler metric with large perturbation form [27]. 
We show that this equation becomes more explicit when we use an almost-K\"ahler anti-self-dual metric. 
In particular, we get an explicit bound of the perturbation form. 
Combining with Liu's theorem [18], we get useful topological information when we also assume
$M$ admits a positive scalar curvature metric.
In particular, we show that $\mathbb{CP}^{2}\#n\overline{\mathbb{CP}^{2}}$ admits an almost-K\"ahler anti-self-dual metric if and only if  $n\geq 10$.
We state our main results below.

\newtheorem{Theorem}{Theorem}
\begin{Theorem}
Suppose a smooth, compact 4-manifold $M$ admits an almost K\"ahler ASD metric. 
If $M$ also admits a metric of positive scalar curvature, then it is diffeomorphic to one of the following:

\begin{itemize}
   \item$\mathbb{CP}^{2}\#n\overline{\mathbb{CP}^{2}}$ for $n\geq 10$;
   
   \item$S^{2}\times \Sigma_{\mathbf{g}}$ and non-trivial $S^{2}$-bundle over $\Sigma_{\mathbf{g}}$, where $\Sigma_{\mathbf{g}}$ is a Riemann surface with 
   genus $\mathbf{g}\geq 2$;
   \item$(S^{2}\times \Sigma_{\mathbf{g}})\#n\overline{\mathbb{CP}^{2}}$ for $n\geq 1$; or
   \item$(S^{2}\times T^{2})\#n\overline{\mathbb{CP}^{2}}$ for $n\geq 1$.
 \end{itemize}
Conversely, each of the following differentiable manifolds admits an almost-K\"ahler anti-self-dual metric:

\begin{itemize}
  \item$\mathbb{CP}^{2}\#n\overline{\mathbb{CP}^{2}}$  for $n\geq 10$;
  \item$S^{2}\times \Sigma_{\mathbf{g}}$ and non-trivial $S^{2}$-bundle over $\Sigma_{\mathbf{g}}$, where $\Sigma_{\mathbf{g}}$ is a Riemann surface with genus $\mathbf{g}\geq 2$;
  \item$(S^{2}\times \Sigma_{\mathbf{g}})\#n\overline{\mathbb{CP}^{2}}$ for $n\geq 1$; or
  \item$(S^{2}\times T^{2})\#n\overline{\mathbb{CP}^{2}}$ for $n\geq 4$.
 \end{itemize}
 Except $\mathbb{CP}^{2}\#10\overline{\mathbb{CP}^{2}}$ and $S^{2}\times \Sigma_{\mathbf{g}}$, each of those admits a strictly almost-K\"ahler anti-self-dual metric. 
\end{Theorem}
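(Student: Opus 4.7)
Since the almost-K\"ahler form $\omega$ is closed and non-degenerate, $M$ is symplectic. A pointwise identity for almost-K\"ahler metrics, expressing $|W_{+}|^{2}$ as $\tfrac{s^{2}}{24}$ plus a non-negative contribution from the Nijenhuis tensor, forces the scalar curvature of any almost-K\"ahler ASD metric to vanish identically; Gauss-Bonnet and the signature formula then combine to give $2\chi+3\sigma\leq 0$, with equality only if the metric is Einstein. Independently, the paper's main Seiberg-Witten input produces a non-trivial perturbed monopole class for the spin$^{c}$ structure determined by $\omega$, and when combined with the PSC hypothesis and Liu's theorem [18] this restricts $M$ to a rational or ruled surface (possibly blown up). Intersecting the two constraints gives precisely the four families; the one subtle point is ruling out $\mathbb{CP}^{2}\#9\overline{\mathbb{CP}^{2}}$ (the rational boundary case $2\chi+3\sigma=0$), which by the paragraph above would have to carry an almost-K\"ahler Einstein metric with $s\geq 0$, hence by Sekigawa's resolution of the Goldberg conjecture in non-negative scalar curvature a K\"ahler Ricci-flat metric, contradicting its rational Kodaira dimension.

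\textbf{Existence.} By the result of Derdzi\'nski recalled in the introduction, every scalar-flat K\"ahler metric on a compact complex surface is ASD, so it is enough to realise each listed diffeotype as a scalar-flat K\"ahler surface. The constructions of Rollin-Singer [24] and Kim-LeBrun-Pontecorvo [10] provide scalar-flat K\"ahler metrics on $\mathbb{CP}^{2}\#n\overline{\mathbb{CP}^{2}}$ for $n\geq 10$, on $(S^{2}\times\Sigma_{\mathbf{g}})\#n\overline{\mathbb{CP}^{2}}$ with $\mathbf{g}\geq 2$ for $n\geq 1$, and on $(S^{2}\times T^{2})\#n\overline{\mathbb{CP}^{2}}$ for $n\geq 4$; the unblown-up $S^{2}$-bundles over $\Sigma_{\mathbf{g}}$ with $\mathbf{g}\geq 2$ carry explicit (twisted) product metrics of opposite constant curvatures that are scalar-flat K\"ahler. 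This covers every item in the existence list.

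\textbf{Strictness and main obstacle.} For each listed manifold except $\mathbb{CP}^{2}\#10\overline{\mathbb{CP}^{2}}$ and the unblown-up $S^{2}$-bundles over $\Sigma_{\mathbf{g}}$, I would deform the scalar-flat K\"ahler metric produced above through almost-K\"ahler ASD metrics, using the twistor-theoretic reformulation of ASD deformations as Kodaira-Spencer deformations of the twistor space indicated in the introduction, and then verify that generic members of the deformation are not K\"ahler (this last step reduces to checking that the integrable locus is a proper subvariety of the Kuranishi space). The two excluded cases correspond precisely to rigidity of the ambient complex structure, so that the ASD deformation space is entirely K\"ahler. The \textbf{main obstacle} I anticipate is on the classification side: extracting the sharp threshold $n\geq 10$ (rather than $n\geq 9$) in the rational family requires combining the Seiberg-Witten input and Liu's theorem with the non-trivial rigidity ingredient (Sekigawa/Goldberg) described above, and analogous care is needed to track $\sigma\leq 0$ and the Chern-number constraints across the ruled families so that the exhaustion of the list is tight on both sides.
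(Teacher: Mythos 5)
The classification half of your argument rests on a false pointwise identity. You claim that for an almost-K\"ahler metric one has $|W_{+}|^{2}=\tfrac{s^{2}}{24}+(\text{nonnegative Nijenhuis term})$, so that $W_{+}=0$ forces $s\equiv 0$. That identity is a \emph{K\"ahler} fact; in the strictly almost-K\"ahler case $W_{+}(\omega,\omega)$ acquires a correction from $\nabla\omega$. The correct statement (the paper's Lemma~2, obtained from the Weitzenb\"ock formula $\Delta\omega=\nabla^{*}\nabla\omega-2W_{+}(\omega,\cdot)+\tfrac{s}{3}\omega$ applied to the harmonic form $\omega$ with $|\omega|=\sqrt{2}$) is $0=|\nabla\omega|^{2}+\tfrac{2s}{3}$, i.e.\ $s=-\tfrac{3}{2}|\nabla\omega|^{2}\leq 0$ with equality exactly where the metric is K\"ahler. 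If your identity were true, every almost-K\"ahler ASD metric would be scalar-flat K\"ahler and the entire ``strictly almost-K\"ahler'' half of the theorem would be vacuous. Everything downstream of this step collapses: the inequality $2\chi+3\tau\leq 0$ with equality only for Einstein metrics, and the Sekigawa/Goldberg exclusion of $\mathbb{CP}^{2}\#9\overline{\mathbb{CP}^{2}}$, are not available by this route. The paper instead proves $c_{1}^{2}<0$ directly: for a strictly almost-K\"ahler ASD metric $g$ there is a unique, nondegenerate solution of the Seiberg-Witten equations for an explicit perturbation (Theorem~7 and Lemmas~8--9), so $n_{c}=1$ for the chamber of $(g,0)$; for the chamber of a positive-scalar-curvature metric $n_{c}=0$ (Lemma~10); and if $c_{1}\neq 0$ and $c_{1}^{2}\geq 0$ these chambers coincide, a contradiction. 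In the remaining (scalar-flat K\"ahler) case, $c_{1}\cdot[\omega]=0$ with $\omega$ self-dual harmonic forces $c_{1}\in H^{-}$, hence $c_{1}^{2}<0$ when $c_{1}\neq 0$, and $c_{1}=0$ is excluded by Liu's list. The sharp bound $n\geq 10$ is then just $9-n=c_{1}^{2}<0$; no Einstein rigidity input is needed or used.

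Your existence paragraph is essentially the paper's argument (realize each diffeotype as a scalar-flat K\"ahler surface via [10], [11], [13], [24], using that scalar-flat K\"ahler is equivalent to ASD K\"ahler). The strictness paragraph has the right idea but is not a proof: ``check that the integrable locus is a proper subvariety of the Kuranishi space'' is exactly the content that must be supplied. The paper supplies it by an index computation on the twistor space: the moduli space of ASD conformal structures has virtual dimension $-\tfrac{1}{2}(15\chi+29\tau)$ while the moduli space of scalar-flat K\"ahler (equivalently, ASD Hermitian) structures, governed by $H^{1}(Z,\Theta_{Z,D})$, has virtual dimension $-(3\chi+8\tau)$; after checking the vanishing of obstructions (no holomorphic vector fields, $c_{1}\neq 0$) the strict inequality $-9\chi>13\tau$ produces ASD deformations that are not conformally K\"ahler, and a separate Lemma~5 shows each nearby ASD conformal class still contains an almost-K\"ahler representative (the self-dual harmonic form remains nondegenerate and can be normalized to $|\omega|=\sqrt{2}$). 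Without that last lemma and the dimension count, your deformation step does not yield strictly almost-K\"ahler ASD metrics, and your identification of the two exceptional cases ($n=10$ in the rational family, where the two dimensions coincide, and $S^{2}\times\Sigma_{\mathbf{g}}$) is an assertion rather than a consequence of your argument.
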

\noindent
\textbf{Acknowledgement}. This article is based on the author's Ph.D. Thesis. 
The author would like to thank Prof. Claude LeBrun for suggesting the problem,
as well as for his constant help and encouragement.

\section{\large\textbf{Geometry of almost-K\"ahler anti-self-dual metrics}}\label{S:P}
We begin by discussing the basic property of almost-K\"ahler anti-self-dual metrics. 
Throughout this section, we assume $M$ is a smooth, compact, oriented 4-manifold.
\newtheorem{Lemma}{Lemma} 
\begin{Lemma}
Let $(M, g, \omega, J)$ be an almost-K\"ahler structure. 
Then $\omega$ is a self-dual harmonic 2-form $\omega$ with $|\omega|=\sqrt{2}$. 
\end{Lemma}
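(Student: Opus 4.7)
The plan is to verify the three claims (self-duality, $|\omega|=\sqrt 2$, harmonicity) by a pointwise model calculation followed by a one-line use of the Hodge-star.

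First I would choose, at an arbitrary point $x\in M$, a $g$-orthonormal, $J$-adapted frame $\{e_1,e_2,e_3,e_4\}$ with $Je_1=e_2$ and $Je_3=e_4$; the existence of such a frame is standard since $J$ is $g$-compatible. The compatibility relation $g(v,w)=\omega(v,Jw)$ immediately evaluates each $\omega(e_i,e_j)$: for instance $\omega(e_1,e_2)=\omega(e_1,Je_1)=g(e_1,e_1)=1$ and $\omega(e_1,e_3)=\omega(e_1,-J e_4)=-g(e_1,e_4)=0$, and so on. The outcome is the pointwise normal form
\[
\omega\;=\;e^{1}\wedge e^{2}\;+\;e^{3}\wedge e^{4}.
\]
From this normal form both the norm and the self-duality are immediate: $|\omega|^{2}=|e^{1}\wedge e^{2}|^{2}+|e^{3}\wedge e^{4}|^{2}=2$, and since $*(e^{1}\wedge e^{2})=e^{3}\wedge e^{4}$ and $*(e^{3}\wedge e^{4})=e^{1}\wedge e^{2}$ in the orientation determined by $(g,J)$, we get $*\omega=\omega$. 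Because the frame was chosen at an arbitrary point, these identities hold globally.

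For harmonicity I would combine $d\omega=0$ (which is part of the definition of a symplectic, hence almost-K\"ahler, form) with self-duality. On $2$-forms in dimension $4$ we have $d^{*}=-{*}d{*}$, so
\[
d^{*}\omega \;=\; -{*}d({*}\omega) \;=\; -{*}d\omega \;=\; 0.
\]
Thus $d\omega=0$ and $d^{*}\omega=0$, so $\omega$ is harmonic.

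There is no real obstacle here: the only subtle point is choosing the orientation consistently so that the $J$-adapted frame is positively oriented, ensuring $*\omega=+\omega$ rather than $-\omega$. This is a matter of convention, and it is fixed once one declares (as is standard in the almost-K\"ahler setting) that the orientation of $M$ is the one induced by $\tfrac12\omega\wedge\omega$; with that choice the computation above goes through verbatim.
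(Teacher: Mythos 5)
Your proof is correct and follows essentially the same route as the paper: both arguments produce the pointwise normal form $\omega = e^{1}\wedge e^{2}+e^{3}\wedge e^{4}$ in a $J$-adapted orthonormal frame to read off self-duality and $|\omega|=\sqrt{2}$, and then derive $d^{*}\omega = -{*}d{*}\omega = -{*}d\omega = 0$ from closedness. Your added remark about fixing the orientation by $\tfrac12\,\omega\wedge\omega$ is a sensible clarification of a convention the paper leaves implicit, but it does not change the argument.
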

\begin{proof}The easiest way to see this is to use an orthonormal basis. 
Since $J$ is orthogonal with respect to $g$, there exists an orthonormal basis of the form $\{e_{1}, e_{2}=J(e_{1}), e_{3}, e_{4}=J(e_{3})\}$.
Then the corresponding 2-form $\omega$ is 
\[e_{1}\wedge e_{2} + e_{3}\wedge e_{4}.\]
This is self-dual and $|\omega|=\sqrt{2}$.
Since $\omega$ is closed and self-dual,  we get 
\[d^{*}\omega=-\ast d\ast\omega=-\ast d\omega=0,\]
and therefore, 
\[\Delta\omega=(dd^{*}+d^{*}d)\omega=0.\]
\end{proof}
\begin{Lemma}
The scalar curvature of almost-K\"ahler anti-self-dual metric is nonpositive. 
Moreover, the scalar curvature is identically zero if and only if the metric is K\"ahler.
\end{Lemma}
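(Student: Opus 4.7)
The natural plan is to exploit the Weitzenböck formula for the Hodge Laplacian acting on self-dual 2-forms. In dimension four, for a self-dual 2-form $\alpha$ one has
\[
\Delta \alpha \;=\; \nabla^{*}\nabla \alpha \;-\; 2W_{+}(\alpha) \;+\; \frac{s}{3}\alpha.
\]
By the previous lemma, the symplectic form $\omega$ of an almost-K\"ahler structure is self-dual, harmonic, and of constant norm $|\omega|=\sqrt{2}$. Moreover, since the metric is assumed anti-self-dual, the operator $W_{+}$ vanishes identically. Substituting $\Delta\omega=0$ and $W_{+}=0$ in the Weitzenb\"ock formula gives the pointwise identity
\[
\nabla^{*}\nabla \omega \;=\; -\frac{s}{3}\,\omega.
\]

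The next step is to pair this identity with $\omega$ pointwise. On one side, $\langle \nabla^{*}\nabla \omega,\omega\rangle = \tfrac{1}{2}\Delta|\omega|^{2} + |\nabla\omega|^{2}$; since $|\omega|^{2}=2$ is constant, the Laplacian term vanishes. On the other side, $\langle -\tfrac{s}{3}\omega,\omega\rangle = -\tfrac{2s}{3}$. Equating these gives the pointwise formula
\[
|\nabla \omega|^{2} \;=\; -\frac{2s}{3},
\]
which immediately yields $s \leq 0$ everywhere, establishing the first assertion.

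For the equality case, $s\equiv 0$ forces $\nabla\omega \equiv 0$. Since $g$ is parallel and $\omega(v,w)=g(Jv,w)$, the almost complex structure $J$ is then also parallel with respect to the Levi-Civita connection, so in particular its Nijenhuis tensor vanishes and $J$ is integrable; thus $(g,\omega,J)$ is K\"ahler. Conversely, a K\"ahler metric has $\nabla\omega=0$, so the identity above forces $s\equiv 0$.

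I anticipate no serious obstacle here: the argument is essentially a one-line application of the Weitzenb\"ock formula combined with the constancy of $|\omega|$. The only point that requires care is citing (or briefly justifying) the sign and normalization conventions in the Weitzenb\"ock formula on self-dual 2-forms, and recording the standard fact that a compatible parallel almost complex structure is automatically integrable.
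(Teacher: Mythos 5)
Your proof is correct and follows essentially the same route as the paper: apply the Weitzenb\"ock formula for self-dual 2-forms to the harmonic, constant-norm form $\omega$, use anti-self-duality to kill the $W_{+}$ term, and read off $|\nabla\omega|^{2}=-\tfrac{2s}{3}$. Your explicit justification of the equality case (parallel $\omega$ implies parallel $J$, hence integrability) is a small amount of detail the paper leaves implicit, but the argument is the same.
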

\begin{proof}
On an oriented, smooth, compact  riemannian 4-manifold, there is well-known Weitzenb\"ock formula [3] for self-dual 2-forms
\begin{equation}
\Delta\omega=\nabla^{*}\nabla\omega-2W_{+}(\omega, \cdot)+\frac{s}{3}\omega.
\end{equation}

If we take an inner product with $\omega$ in $(2)$, we get
\[<\Delta\omega,\omega>=<\nabla^{*}\nabla\omega,\omega>-2W_{+}(\omega, \omega)+\frac{s}{3}|\omega|^{2}.\]
If $g$ is an almost-K\"ahler, then the corresponding symplectic 2-form $\omega$ is harmonic and $|\omega|=\sqrt{2}$. Thus, we get
\[0=\frac{1}{2}\Delta|\omega|^{2}+|\nabla\omega|^{2}-2W_{+}(\omega,\omega)+\frac{s}{3}|\omega|^{2}\]
\[0=|\nabla\omega|^{2}-2W_{+}(\omega,\omega)+\frac{2s}{3}\]
Since $g$ is anti-self-dual, we get 
\begin{equation}
0=|\nabla\omega|^{2}+\frac{2s}{3}.
\end{equation}
\end{proof}

Note that an anti-self-dual metric is real analytic in suitable coordinates. 
Thus, $\nabla\omega$ is also real analytic and therefore, it does not vanish on an open set. 
Moreover, it can only vanishes on the union of real analytic subvarieties.
Thus, if $g$ is a strictly almost-K\"ahler anti-self-dual metric, then $s\neq 0$ on an open dense subset. 
 
 When there is an almost-complex structure $J$ on $M$, the complexified tangent vector bundle decomposes as
\[TM\otimes\mathbb{C}=T^{1,0}\oplus T^{0,1},\] 
where 
\[T^{1,0}=\{Z=X-iJ(X)\in T^{\mathbb{C}} | X\in TM\}\]
\[T^{0,1}=\{Z=X+iJ(X)\in T^{\mathbb{C}} | X\in TM\}.\]

When we evaluate 2-forms on complexified vectors, we extend it complex linearly on both factors. 
Accordingly, we define extended metric on $T^{\mathbb{C}}$ by
\[g_{\mathbb{C}}(iX, Y)=ig_{\mathbb{C}}(X, Y)\]
\[g_{\mathbb{C}}(X, iY)=ig_{\mathbb{C}}(X, Y),\]
where $X, Y \in TM$. 
In this convention, we have 
\[g_{\mathbb{C}}(T^{1,0}, T^{1,0})=0.\]
Recall $\omega$ is related to $g$ by $\omega(X, Y)=g(JX, Y)$. This can be extended complex bilinearly on both factors and therefore, we can think of $\omega$ as an element of $\Lambda^{2}_{\mathbb{C}}$.
Then $\omega$ associated with metric $g_{\mathbb{C}}$ is an element of $\Lambda^{1,1}$.

Following the outline given in [25], we show that when $d\omega=0$, the zero set of Nijenhuis tensor $N$ is equal to the zero set of $\nabla\omega$. 
\newtheorem{Proposition}{Proposition}
\begin{Proposition}
Let $(M, g, J, \omega)$ be an almost-K\"ahler structure. 
Then the zero set of the Nijenhuis tensor $N$ is equal to the zero set of $\nabla\omega$. 
\end{Proposition}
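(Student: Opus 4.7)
The plan is to exploit the standard algebraic identity in almost-Hermitian geometry that expresses the covariant derivative of $J$ (equivalently, of $\omega$) in terms of $d\omega$ and the Nijenhuis tensor. Concretely, for any almost-Hermitian $(M,g,J,\omega)$ one has
\[
2g((\nabla_{X}J)Y,Z)=d\omega(X,JY,JZ)-d\omega(X,Y,Z)+g(JX,N(Y,Z)).
\]
This identity is derived from the Koszul formula for the Levi-Civita connection, combined with the compatibility $g(J\cdot,J\cdot)=g(\cdot,\cdot)$ and the expression of $d\omega$ as the alternating sum of cyclic terms in Lie brackets.

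The first step I would carry out is to verify the identity directly (or quote it from a standard reference such as Kobayashi--Nomizu or Gauduchon). One writes $2g(\nabla_{X}(JY),Z)-2g(J\nabla_{X}Y,Z)$, expands both terms via Koszul, and then rewrites the resulting sum of brackets using the definitions
\[
N(Y,Z)=[Y,Z]+J[JY,Z]+J[Y,JZ]-[JY,JZ],
\]
\[
d\omega(X,Y,Z)=X\omega(Y,Z)+\text{cyclic}-\omega([X,Y],Z)-\text{cyclic}.
\]
Matching terms produces the stated formula.

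The second step is to specialise to the almost-Kähler case, where $d\omega=0$, so the identity collapses to
\[
2g((\nabla_{X}J)Y,Z)=g(JX,N(Y,Z)).
\]
Since $\nabla g=0$, we have $(\nabla_{X}\omega)(Y,Z)=g((\nabla_{X}J)Y,Z)$, so this is simultaneously a formula for $\nabla\omega$ in terms of $N$.

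From here the proposition is immediate at each point $p\in M$. If $N_{p}=0$, the right-hand side vanishes for every $X,Y,Z\in T_{p}M$, so $(\nabla\omega)_{p}=0$. Conversely, if $(\nabla\omega)_{p}=0$, then $g(JX,N(Y,Z))_{p}=0$ for all $X,Y,Z$; since $J_{p}$ is invertible and $g_{p}$ is non-degenerate, this forces $N_{p}=0$. The main obstacle is purely bookkeeping: keeping track of signs in the Koszul identity and isolating precisely the combinations of brackets that reassemble into $N$ and $d\omega$. Once the identity is in hand, the equality of zero sets is a one-line consequence of the non-degeneracy of $g$ and invertibility of $J$.
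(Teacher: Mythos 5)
Your proof is correct, but it takes a genuinely different route from the paper's. The paper works entirely in the complexified tangent bundle: it first computes $(\nabla_{X}\omega)(Y,Z)=\pm 2ig(\nabla_{X}Y,Z)$ for $Y,Z$ of pure type, then uses torsion-freeness of the Levi-Civita connection to convert the condition $[T^{0,1},T^{0,1}]\subset T^{0,1}$ (i.e.\ the vanishing of $N$) into the vanishing of an antisymmetrized piece of $\nabla\omega$, and finally feeds in $d\omega=0$ to kill the remaining components type by type. You instead invoke the closed-form real identity
\[
2g((\nabla_{X}J)Y,Z)=d\omega(X,JY,JZ)-d\omega(X,Y,Z)+g(JX,N(Y,Z)),
\]
which packages the same Koszul-formula computation into a single tensorial statement; since $\nabla g=0$ gives $(\nabla_{X}\omega)(Y,Z)=g((\nabla_{X}J)Y,Z)$, setting $d\omega=0$ expresses $\nabla\omega$ pointwise and linearly in terms of $N$ via $g$ and $J$, so both inclusions of zero sets follow at once from the nondegeneracy of $g$ and the invertibility of $J$. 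Two remarks. First, the precise constants and signs in that identity depend on the normalization conventions for $N$ and for $d$ acting on $2$-forms (compare Kobayashi--Nomizu with Gauduchon), so if you derive it yourself rather than quoting it you must fix conventions carefully; this bookkeeping does not affect the zero-set conclusion, which only needs that $\nabla\omega$ and $N$ determine each other linearly once $d\omega=0$. Second, your route has the advantage of making the direction $\nabla\omega=0\Rightarrow N=0$ completely explicit, whereas the paper's componentwise argument only spells out $N=0\Rightarrow\nabla\omega=0$ and leaves the converse (which holds for any torsion-free connection, without using $d\omega=0$) implicit.
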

\begin{proof}
We can check that 
\[(\nabla_{X}\omega)(Y, Z)=2ig(\nabla_{X}Y, Z),\]
for $Y, Z\in T^{1,0}$ and $X\in T^{\mathbb{C}}$.
And for $Y, Z \in T^{0,1}$, we have 
\[(\nabla_{X}\omega)(Y, Z)=-2ig(\nabla_{X}Y, Z).\]
Using this fact and  the torsion-free property of the Levi-Civita connection, we have 
\[g([X,Y],Z)=g(\nabla_{X}Y,Z)-g(\nabla_{Y}X, Z)\]
\[=-\frac{1}{2i}(\nabla_{X}\omega)(Y,Z)+\frac{1}{2i}(\nabla_{Y}\omega)(X, Z),\]
for $X, Y, Z\in T^{0,1}$.
If the Nijenhuis tensor vanishes, then $[X, Y]\in T^{0,1}$ and therefore, we have $g([X, Y], Z)=0$.
Thus, we get 
\[(\nabla_{X}\omega)(Y,Z)-(\nabla_{Y}\omega)(X, Z)=0\]
On the other hand, by definition we have
\[d\omega(X, Y, Z)=(\nabla_{X}\omega)(Y, Z)-(\nabla_{Y}\omega)(X, Z)+(\nabla_{Z}\omega)(X, Y).\]
Thus, if $d\omega=0$ and $N$ vanishes, then 
\[(\nabla_{Z}\omega)(X, Y)=0\]
for $X, Y, Z \in T^{0,1}$.
Also we have $(\nabla_{X}\omega)(Y, Z)=0$ and $(\nabla_{Y}\omega)(X, Z)=0$ for $X, Y\in T^{0,1}$ and $Z\in T^{1,0}$.
Then from $d\omega=0$, we get
\[(\nabla_{Z}\omega)(X, Y)=0,\]
for  $X, Y\in T^{0,1}$ and $Z\in T^{1,0}$.
Combining these two, we get $(\nabla_{Z}\omega)(X, Y)=0$ for $Z\in T^{\mathbb{C}}$ and $X, Y\in T^{0,1}$.

When $N=0$, we also have $[T^{1,0}, T^{1,0}]\in T^{1,0}$ and therefore $g([X, Y], Z)=0$ holds for $X, Y, Z\in T^{1,0}$. 
In the same way, we can show $(\nabla_{Z}\omega)(X,Y)=0$  for $Z\in T^{\mathbb{C}}$ and $X, Y\in T^{1,0}$.
Since $(\nabla_{X}\omega)(Y, Z)=0$ for $X\in T^{\mathbb{C}}$ and $Y\in T^{1,0}$,  $Z\in T^{0,1}$,  we can conclude when $d\omega=0$ holds, the zero set of $N$ is equal to the zero set of $\nabla\omega$.
\end{proof}

Armstrong found certain topological obstruction when the Nijenhuis tensor is nowhere vanishing [1].

\begin{Theorem}
(Armstrong) Let $(M, J)$ be a compact smooth 4-manifold with an almost-complex structure $J$.
 If the Nijenhuis tensor $N$ is nowhere vanishing, then $5\chi+6\tau=0$.
\end{Theorem}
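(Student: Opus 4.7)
The plan is to exhibit the Nijenhuis tensor $N$ as a (nowhere-vanishing) section of a complex rank two vector bundle on $M$, and then extract the stated topological identity by computing its top Chern class.

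First I would pin down the algebraic type of $N$. The standard identities $N(JX,Y)=N(X,JY)=-JN(X,Y)$, together with antisymmetry and the $\mathbb{C}$-bilinear extension of $N$ to $T^{\mathbb{C}}$, imply that $N$ vanishes identically on the mixed summand $T^{1,0}\otimes T^{0,1}$; that $N|_{T^{0,1}\otimes T^{0,1}}$ takes values in $T^{1,0}$; and that $N|_{T^{1,0}\otimes T^{1,0}}$ is the complex conjugate of the preceding component, carrying no extra information. Consequently $N$ is determined by, and vanishes exactly where, the corresponding section of the complex rank two bundle
$$E \;=\; \Lambda^{0,2}T^{\ast}M \;\otimes\; T^{1,0}M.$$
The hypothesis therefore yields a nowhere-vanishing section of $E$, and so its top Chern class satisfies $c_{2}(E)=0$ in $H^{4}(M;\mathbb{Z})$.

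Next I would compute $c_{2}(E)$ in terms of the Chern classes $c_{1}$, $c_{2}$ of the almost-complex structure $J$. Since $\Lambda^{0,2}T^{\ast}M=\det\bigl((T^{0,1})^{\ast}\bigr)$ and $T^{0,1}=\overline{T^{1,0}}$, one has $c_{1}(\Lambda^{0,2})=c_{1}$. The standard formula for the Chern classes of the tensor product of a line bundle with a rank-two bundle then gives
$$c_{2}(E) \;=\; c_{1}(\Lambda^{0,2})^{2}+c_{1}(\Lambda^{0,2})\,c_{1}(T^{1,0})+c_{2}(T^{1,0}) \;=\; 2c_{1}^{2}+c_{2}.$$

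Finally I would evaluate on the fundamental class and apply the well-known formulas $c_{2}[M]=\chi(M)$ and $c_{1}^{2}[M]=2\chi+3\tau$ which hold on any almost-complex four-manifold, obtaining
$$0 \;=\; 2(2\chi+3\tau)+\chi \;=\; 5\chi+6\tau,$$
as claimed. The step that requires the most care is the first one: verifying that the mixed components of $N$ really vanish and identifying $N$ with an honest section of $E$. Once that algebraic type analysis is in place, the remainder is a routine characteristic-class computation.
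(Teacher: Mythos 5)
Your argument is correct, and in fact it is essentially Armstrong's original proof: the paper itself states this theorem only as a citation of [1] and gives no proof, so there is nothing internal to compare against. The algebraic type analysis (mixed components vanish, the $(0,2)$-component determines $N$ as a section of $\Lambda^{0,2}\otimes T^{1,0}$), the tensor-product Chern class computation $c_{2}(E)=2c_{1}^{2}+c_{2}$, and the evaluation $2(2\chi+3\tau)+\chi=5\chi+6\tau$ are all right, with the nowhere-vanishing hypothesis killing the Euler class of the underlying oriented real rank-four bundle as you say.
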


\newtheorem{Corollary}{Corollary}
\begin{Corollary}
Suppose $M$ admits an almost-K\"ahler ASD metric $g$. If $5\chi+6\tau\neq 0$, then the scalar curvature $s$ vanishes somewhere. 
\end{Corollary}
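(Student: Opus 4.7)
The plan is to prove the contrapositive: assume that $s$ is nowhere zero, and then chain together the three ingredients (the pointwise identity from Lemma~2, the zero-set identification in Proposition~1, and Armstrong's topological obstruction) to conclude $5\chi + 6\tau = 0$.

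First, I would recall the pointwise identity derived in the proof of Lemma~2, namely
\[
|\nabla \omega|^{2} + \frac{2s}{3} = 0,
\]
which holds at every point of $M$ for any almost-K\"ahler ASD metric. If $s$ never vanishes on $M$, then by Lemma~2 we have $s<0$ everywhere, and the above identity forces $|\nabla \omega|^{2} = -\tfrac{2s}{3} > 0$ at every point. In particular, $\nabla \omega$ is nowhere vanishing on $M$.

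Next, since $(M, g, \omega, J)$ is almost-K\"ahler, Proposition~1 applies and identifies the zero set of the Nijenhuis tensor $N$ with the zero set of $\nabla \omega$. Combined with the previous step, this shows $N$ is nowhere vanishing on $M$. Armstrong's theorem (Theorem~2) then yields $5\chi + 6\tau = 0$. Taking the contrapositive gives the claim: if $5\chi + 6\tau \neq 0$, then $s$ must vanish somewhere on $M$.

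There is no real obstacle here; the corollary is a direct concatenation of the three preceding results, and essentially all the work has already been done. The only thing worth being careful about is that we genuinely need both halves of the dichotomy in Lemma~2 (that $s \leq 0$ and that $s \equiv 0$ would force the metric to be K\"ahler, so that the almost-K\"ahler assumption combined with the ASD assumption is what makes the identity $|\nabla\omega|^{2} = -\tfrac{2s}{3}$ available globally), but none of this requires any new computation.
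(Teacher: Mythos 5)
Your proof is correct and uses exactly the same three ingredients as the paper (the pointwise identity $|\nabla\omega|^{2}+\tfrac{2s}{3}=0$ from Lemma~2, the zero-set identification in Proposition~1, and Armstrong's theorem); the paper merely runs the chain in the direct rather than contrapositive direction. No issues.
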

\begin{proof}
By Armstrong's theorem, the Nijenhuis tensor should vanish somewhere. 
Since $d\omega=0$, $\nabla\omega=0$ vanish somewhere by Proposition 1. 
Since the zero set of the scalar curvature is equal to the zero set of $\nabla\omega$,
$s$ vanish somewhere and if $g$ is not K\"ahler, $s$ is negative somewhere. 
\end{proof}

\newtheorem{Example}{Example}
\begin{Example}

In the next section, we show that the following examples admit almost-K\"ahler anti-self-dual metrics for certain values $n$. 

$1)$ $ \mathbb{CP}^{2}\#n\overline{\mathbb{CP}^{2}}$.

In this case, $\chi=3+n$ and $\tau=1-n$. Using this, we have
\[5\chi+6\tau=5(3+n)+6(1-n)=21-n\]
Thus, except $n=21$, the scalar curvature of an almost-K\"ahler ASD metric should vanish somewhere. 

$2)$ $S^{2}\times T^{2}\#n\overline{\mathbb{CP}^{2}}$.

In this case, we have $\chi=n$ and $\tau=-n$. 
Thus, we have 
\[5\chi+6\tau=5n-6n=-n\]
Thus, for $n\geq 1$, the scalar curvature should vanish somewhere.

$3)$ $S^{2}\times \Sigma_{\mathbf{g}}\#n\overline{\mathbb{CP}^{2}}$.

In this case, we have $\chi=-4(\mathbf{g}-1)+n$ and $\tau=-n$. 
Thus, we have 
\[5\chi+6\tau=-20\mathbf{g}+5n+20-6n=-20\mathbf{g}-n+20\]
Thus $5\chi+6\tau$ is always negative for $n\geq 0$ and $\mathbf{g}\geq 2$. 

\end{Example}
 
 Suppose a smooth 4-manifold $M$ admits an almost-complex structure $J$ and a compatible metric $g$.
 Then the complex-valued 2-forms decompose as 
\[\Lambda^{2}T^{*}_{\mathbb{C}}=(\Lambda^{2.0}\oplus \Lambda^{0,2})\oplus\Lambda ^{1,1}.\]
On the other hand, on an oriented, smooth 4-manifold, we have the following decomposition
\[\Lambda^{2}T^{*}_{\mathbb{C}}=\Lambda^{+}_{\mathbb{C}}\oplus \Lambda^{-}_{\mathbb{C}}.\]
As before, we define an associated 2-form $\omega$ by $\omega(v, w)=g(Jv, w)$.
When it is not required $d\omega=0$, $(M, g, \omega, J)$ is called an almost-Hermitian manifold. 
In this case, two decompositions are compatible in the following way. 
\begin{Lemma}
Let $(M,g,\omega, J)$ be an almost-Hermitian 4-manifold. 
Then we have
\[\Lambda^{+}_{\mathbb{C}}=\mathbb{C}\omega\oplus \Lambda^{2.0}\oplus \Lambda^{0,2},\]
\[\Lambda^{-}_{\mathbb{C}}=\Lambda_{0}^{1,1},\]
where $\Lambda_{0}^{1,1}$ is the orthogonal complement of $\omega$ in the space of $\Lambda^{1,1}$.
\end{Lemma}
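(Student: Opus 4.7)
The plan is to carry out the whole argument in a local $J$-adapted orthonormal coframe $\{e^{1}, e^{2}=Je^{1}, e^{3}, e^{4}=Je^{3}\}$ (which exists at every point, by compatibility of $J$ with $g$). In such a frame $\omega = e^{1}\wedge e^{2}+e^{3}\wedge e^{4}$, and the three-dimensional real spaces $\Lambda^{\pm}$ have the standard orthogonal bases
\[
\omega,\quad e^{1}\wedge e^{3}-e^{2}\wedge e^{4},\quad e^{1}\wedge e^{4}+e^{2}\wedge e^{3}
\]
for $\Lambda^{+}$, and the corresponding ``minus'' combinations for $\Lambda^{-}$. This reduces the whole lemma to a pointwise linear-algebra check.

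First I would note that $\omega$ itself already lies in $\Lambda^{+}\cap \Lambda^{1,1}$: self-duality is immediate from the coframe expression, and the identity $\omega(JX,JY)=\omega(X,Y)$ identifies $\omega$ as of type $(1,1)$. Next, introducing the $(1,0)$-coframe $\theta^{1}=e^{1}+ie^{2}$, $\theta^{2}=e^{3}+ie^{4}$, the line $\Lambda^{2,0}$ is spanned by $\theta^{1}\wedge \theta^{2}$. Expanding this single wedge product and comparing it against the standard basis of $\Lambda^{+}$ shows
\[
\theta^{1}\wedge \theta^{2}=(e^{1}\wedge e^{3}-e^{2}\wedge e^{4})+i(e^{1}\wedge e^{4}+e^{2}\wedge e^{3}) \in \Lambda^{+}_{\mathbb{C}};
\]
conjugating gives $\Lambda^{0,2}\subset \Lambda^{+}_{\mathbb{C}}$ as well.

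The first equality is then forced by a dimension count: $\mathbb{C}\omega$, $\Lambda^{2,0}$ and $\Lambda^{0,2}$ are pairwise orthogonal (they carry distinct $J$-types) and each is complex one-dimensional, so their sum has complex dimension three, matching $\dim_{\mathbb{C}}\Lambda^{+}_{\mathbb{C}}$. The second equality is then automatic from the two ambient decompositions $\Lambda^{2}_{\mathbb{C}}=\Lambda^{+}_{\mathbb{C}}\oplus \Lambda^{-}_{\mathbb{C}} = \Lambda^{2,0}\oplus \Lambda^{1,1}\oplus \Lambda^{0,2}$: subtracting the first equality leaves exactly the orthogonal complement of $\mathbb{C}\omega$ inside $\Lambda^{1,1}$, which by definition is $\Lambda^{1,1}_{0}$.

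There is essentially no obstacle. The only computation that does any real work is verifying that $\theta^{1}\wedge \theta^{2}$ sits inside $\Lambda^{+}_{\mathbb{C}}$; after that, the result is pure orthogonality and dimension counting. Worth observing for later use: the argument never invokes $d\omega=0$, so the statement holds for any almost-Hermitian 4-manifold, as claimed.
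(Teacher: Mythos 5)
Your proof is correct and complete; the pointwise coframe computation of $\theta^{1}\wedge\theta^{2}$, together with the orthogonality of the type decomposition and the dimension count, is the standard argument for this fact. Note that the paper states this lemma without any proof at all (it is quoted as a well-known piece of linear algebra), so there is nothing to compare against; your write-up simply supplies the omitted verification, and your closing observation that $d\omega=0$ is never used is consistent with the paper's phrasing of the statement for general almost-Hermitian 4-manifolds.
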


\section{\textbf{Deformation of scalar-flat K\"ahler metrics}}\label{S:P}
In this section, we show the existence of strictly almost-K\"ahler anti-self-dual metrics by deforming the scalar-flat K\'ahler metrics. 
Many examples of scalar-flat K\"ahler metrics are known [10], [13], [24] and their deformation theory has been studied [17]. 
Deformation theory of ASD metrics has been known in a purely differential geometric point of a view on a smooth, 
compact, oriented 4-manifold $M$ using the Atiyah-Singer Index theorem.
 We study them together at the twistor level using short exact sequences of sheaves.
This part mainly depends on [17]. The important well-known result in this context is the following [17].

\begin{Proposition}
 Let $g$ be a K\"ahler metric on a complex surface. 
Then $g$ is anti-self-dual if and only if its scalar curvature $s=0$.
\end{Proposition}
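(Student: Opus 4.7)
The plan is to split the biconditional into its two implications, using the Weitzenb\"ock identity (2) specialized to the K\"ahler form as the engine for each.

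For the forward direction, $W_+=0 \Rightarrow s=0$: a K\"ahler metric satisfies $\nabla\omega=0$, so the identity (3) derived in the proof of Lemma 2 collapses to $2s/3=0$, giving $s=0$ immediately.

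For the reverse direction, $s=0 \Rightarrow W_+=0$, I would prove the stronger assertion that on \emph{any} K\"ahler 4-manifold the self-dual Weyl tensor has eigenvalues $(s/6,\,-s/12,\,-s/12)$ in an orthonormal basis of $\Lambda^+$ adapted to Lemma 3, so that $W_+\equiv 0$ precisely when $s=0$. This breaks into three substeps. First, plug the parallel form $\omega$ into (2): since $\Delta\omega=0$ and $\nabla\omega=0$, the identity forces $W_+(\omega,\omega)=s/3$, so $\omega$ is an eigenvector of $W_+$ with eigenvalue $s/6$ (using $|\omega|^2=2$). Second, show that $W_+$ preserves the type decomposition $\Lambda^+_{\mathbb{C}}=\mathbb{C}\omega\oplus\Lambda^{2,0}\oplus\Lambda^{0,2}$ of Lemma 3: the K\"ahler condition $\nabla J=0$ implies that the Riemann curvature operator $R:\Lambda^2\to\Lambda^2$ commutes with the induced $J$-action on 2-forms, hence preserves $(p,q)$-type, and the trace pieces subtracted to form $W_+$ inherit this property. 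Third, $W_+$ restricted to the complex line $\Lambda^{2,0}$ is a $\mathbb{C}$-linear endomorphism, hence a scalar; self-adjointness of $W_+$ forces this scalar to be real, and trace-freeness of $W_+$ then pins it down as $-s/12$.

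The principal technical obstacle is the second substep, namely promoting $\nabla J=0$ to type-preservation of $W_+$; this is a standard K\"ahler-geometric fact but is the real content of the argument. Steps one and three are then essentially linear algebra, and setting $s=0$ in the resulting eigenvalue list immediately yields $W_+\equiv 0$.
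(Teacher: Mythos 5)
Your argument is correct, but note that the paper itself offers no proof of this Proposition: it is quoted as a known result from LeBrun--Singer [17], so there is no internal argument to compare against. What you have written is essentially the standard proof (due to Derdzinski, and the one underlying [17]): the self-dual Weyl tensor of a K\"ahler surface has pointwise spectrum $\left(\frac{s}{6},-\frac{s}{12},-\frac{s}{12}\right)$ on $\Lambda^{+}=\mathbb{R}\omega\oplus\mathrm{Re}\!\left(\Lambda^{2,0}\oplus\Lambda^{0,2}\right)$, whence $W_{+}\equiv 0$ if and only if $s\equiv 0$, and this single computation delivers both implications at once (your separate forward direction via identity (3) is redundant but harmless). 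The steps check out: the Weitzenb\"ock formula (2) applied to the parallel, hence harmonic, form $\omega$ gives $W_{+}(\omega)=\frac{s}{6}\omega$ as an operator identity, not merely $W_{+}(\omega,\omega)=\frac{s}{3}$, so $\omega$ really is an eigenvector; and you correctly isolate the one nontrivial input, namely that $\nabla J=0$ forces the curvature operator to commute with the involution $\alpha\mapsto\alpha(J\cdot,J\cdot)$ (indeed $R(X,Y)\in\mathfrak{u}(2)$ gives $R\circ\iota(J)=\iota(J)\circ R=R$, so $R$ actually annihilates $\mathrm{Re}(\Lambda^{2,0}\oplus\Lambda^{0,2})$, which even spares you the trace argument, since then $W_{+}=-\frac{s}{12}\,\mathrm{Id}$ there directly). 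The only point worth making explicit if you write this up in full is that self-adjointness of $W_{+}$ already guarantees it preserves $\omega^{\perp}\subset\Lambda^{+}$ once $\omega$ is known to be an eigenvector, so the type-preservation claim is only needed to see that $W_{+}$ acts as a (real) scalar on that two-plane.
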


Let us consider the geometry of scalar-flat K\"ahler metrics briefly. 
This will be useful later when they are compared with almost-K\"ahler ASD metrics.
We define the Ricci form $\rho$ by
\[\rho(X,Y)=Ric(JX, Y).\]
 Then $\rho$ is a closed, real $(1,1)$ form.
The important theorem on a K\"ahler manifold is that  the first Chern class of the manifold is represented by $\frac{1}{2\pi}\rho$ 
\[c_{1}(M):=c_{1}(K^{-1})=[\frac{\rho}{2\pi}].\]
By Lemma 3, we can decompose a $(1,1)$ form $\varphi$, by $\varphi=\frac{1}{2}(\Lambda\varphi)\omega +\varphi_{0}$, 
where $\Lambda\varphi=\langle\varphi, \omega\rangle$ and $\varphi_{0}\in\wedge^{1,1}_{0}$.
From this, we can write the Ricci from $\rho$ as $\rho=\frac{1}{4}s\omega+\rho_{0}$.
Using the fact $[\rho]=2\pi c_{1}$ and $d\mu=\frac{[\omega]^{2}}{2}$, we get
\[2\pi c_{1}\cdot[\omega]=[\rho]\cdot[\omega]=\frac{1}{4}[s\omega]\cdot[\omega]=\int_{M}\frac{1}{2}s\ d\mu.\]

\begin{Proposition}
Let $(M, \omega, g)$ be a K\"ahler surface with the scalar curvature $s$.
Then the following identity holds,
\begin{equation}
\int_{M}sd\mu=4\pi c_{1}\cdot[\omega].
\end{equation}
\end{Proposition}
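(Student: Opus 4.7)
The plan is to turn the identity $c_1(M) = [\rho/2\pi]$ into an integral pairing with $[\omega]$, then exploit the orthogonal decomposition of $\rho$ against $\omega$ coming from Lemma 3.

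First I would recall the Ricci form $\rho(X,Y) = \mathrm{Ric}(JX,Y)$. On a K\"ahler manifold $\rho$ is a closed real $(1,1)$-form whose cohomology class satisfies $c_1(M) = [\rho/2\pi]$. This is precisely the statement recalled just before the proposition, so I can take it as given. Pairing classes with $[\omega]$ in $H^2(M;\mathbb{R})$ is computed by the integral of the wedge product, which gives
\begin{equation*}
2\pi\, c_1 \cdot [\omega] \;=\; [\rho]\cdot [\omega] \;=\; \int_M \rho \wedge \omega.
\end{equation*}

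Next I would use the decomposition of $(1,1)$-forms coming from Lemma 3, namely $\rho = \tfrac{1}{4}s\,\omega + \rho_0$ with $\rho_0 \in \Lambda^{1,1}_0$ perpendicular to $\omega$. The key point is that $\Lambda^{1,1}_0 = \Lambda^-_{\mathbb{C}}$ (by Lemma 3), while $\omega \in \Lambda^+$, so at the pointwise level $\rho_0 \wedge \omega = 0$. Consequently
\begin{equation*}
\rho \wedge \omega \;=\; \tfrac{1}{4} s\, \omega \wedge \omega.
\end{equation*}
Combining this with $d\mu = \omega \wedge \omega/2$ yields $\rho\wedge\omega = \tfrac12 s\,d\mu$, and integration gives $\int_M \rho\wedge\omega = \tfrac12 \int_M s\,d\mu$. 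Substituting into the previous display produces
\begin{equation*}
2\pi\, c_1\cdot[\omega] \;=\; \tfrac{1}{2}\int_M s\,d\mu,
\end{equation*}
which rearranges to the desired identity.

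There is no real obstacle here: every ingredient has been spelled out in the paragraph preceding the proposition, and the argument is essentially a two-line check. The only thing that might deserve a comment is the pointwise vanishing $\rho_0\wedge\omega = 0$, which follows either from $\rho_0$ being primitive in the K\"ahler sense ($\Lambda \rho_0 = 0$) or, equivalently, from the anti-self-duality of $\rho_0$ against the self-dual form $\omega$. Either justification is standard, so I would state it in one sentence and move on.
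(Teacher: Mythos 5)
Your argument is correct and is essentially identical to the paper's: the paper also writes $\rho=\tfrac14 s\,\omega+\rho_0$ using the decomposition from Lemma 3, pairs with $[\omega]$, and uses $\omega\wedge\omega=2\,d\mu$ to get $2\pi c_1\cdot[\omega]=\tfrac12\int_M s\,d\mu$. Your extra sentence justifying $\rho_0\wedge\omega=0$ pointwise (via $\Lambda^{1,1}_0=\Lambda^-_{\mathbb{C}}$) is a welcome clarification of a step the paper leaves implicit, but it is not a different proof.
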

The theorem below tells us which complex surfaces can admit a scalar-flat K\"ahler metric. 
\begin{Theorem}
(Yau) [28], [17] Let $(M,J)$ be a complex surface which admits a Kahler metric $\omega$ such that $c_{1}\cdot[\omega]=0$.
If $c_{1}\neq0$, that is, if $M$ is not covered by a complex torus or K3 surface, then $M$ is a ruled surface. 
\end{Theorem}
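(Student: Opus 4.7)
The plan is to combine Proposition 3 with the Enriques--Kodaira classification of complex surfaces. First I would use Proposition 3 to rewrite the hypothesis $c_{1}\cdot[\omega]=0$ as $K_{M}\cdot[\omega]=0$, where $K_{M}=-c_{1}$ denotes the canonical class. The elementary fact to exploit throughout is that a K\"ahler class has strictly positive intersection with every non-zero effective divisor: for any curve $D\subset M$ one has $[\omega]\cdot D=\int_{D}\omega>0$.

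Next I would show that the Kodaira dimension $\kappa(M)$ equals $-\infty$. Suppose instead that $\kappa(M)\geq 0$; then for some $m\geq 1$ the pluricanonical linear system $|mK_{M}|$ contains an effective divisor $D$, and
\[
0 \;=\; m\,K_{M}\cdot[\omega] \;=\; D\cdot[\omega] \;=\; \int_{D}\omega
\]
forces $D=0$, so $mK_{M}\sim 0$. Thus $K_{M}$ is torsion in $\operatorname{Pic}(M)$; in particular $M$ can carry no $(-1)$-curve $E$ (since $K_{M}\cdot E=-1$ on such a curve), so $M$ is itself minimal.

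The main obstacle is the classification step. Minimal complex surfaces with torsion canonical class are, by the Enriques--Kodaira classification, exactly the K3 surfaces, complex tori, Enriques surfaces, and bielliptic surfaces. The first two are excluded by hypothesis, and the remaining two are also ruled out since Enriques surfaces are doubly covered by K3 surfaces and bielliptic surfaces are finitely covered by complex tori (the parenthetical ``that is'' in the statement signalling that $c_{1}\neq 0$ is meant in the sense of forbidding any such finite cover). Hence $\kappa(M)=-\infty$, and a final appeal to the Enriques--Kodaira list identifies surfaces of Kodaira dimension $-\infty$ as precisely the birationally ruled surfaces, as desired.
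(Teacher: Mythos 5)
The paper does not prove this statement at all: it is quoted as a known theorem of Yau, with references to [28] and [17], so there is no internal proof to compare against. Your classification argument is the standard modern proof and is essentially correct: $c_{1}\cdot[\omega]=0$ plus positivity of a K\"ahler class on effective divisors forces any pluricanonical divisor to vanish, hence $K_{M}$ torsion if $\kappa\geq 0$; torsion $K_{M}$ kills $(-1)$-curves, so $M$ is minimal with $\kappa=0$; the surviving cases are excluded by the hypothesis; and $\kappa=-\infty$ for a K\"ahler surface gives birational ruledness by Castelnuovo--Enriques. Two small points you should make explicit. First, both classification lists you invoke are only correct under the K\"ahler hypothesis: minimal surfaces with torsion canonical bundle also include primary and secondary Kodaira surfaces, and surfaces with $\kappa=-\infty$ also include class VII surfaces; both families are non-K\"ahler and so are ruled out here, but this needs to be said (equivalently, use the evenness of $b_{1}$). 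Second, your parenthetical about the meaning of $c_{1}\neq 0$ deserves one more sentence: for Enriques and bielliptic surfaces $c_{1}$ is a nonzero torsion class in $H^{2}(M,\mathbb{Z})$, so the hypothesis must be read as $c_{1}\neq 0$ in real (de Rham) cohomology --- which is indeed how the paper uses it, since it derives $c_{1}\cdot[\omega]=0$ from $\int_{M}s\,d\mu=0$ --- and under that reading these two families are excluded directly, consistently with your covering-space remark.
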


\newtheorem{Remark}{Remark}
\begin{Remark}
For a scalar-flat K\"ahler metric, we have $c_{1}\cdot [\omega]=0$ by Proposition 3. Therefore,
by Yau's theorem, a scalar-flat K\"ahler surface is either covered by $K3$ or $T^{4}$ or it is a ruled surface.
Suppose $c_{1}\neq 0$. Since $\omega$ is a self-dual harmonic 2-form, we can conclude $c_{1}$ belongs to $H^{-}$. Thus, $c_{1}^{2}<0.$
Let us consider $\mathbb{CP}^{2}\#n\overline{\mathbb{CP}^{2}}$.
Then we have $\chi=3+n$ and $\tau=1-n$. 
Thus, we have 
\[c_{1}^{2}=2\chi+3\tau=2(3+n)+3(1-n)=9-n.\]
Thus, $\mathbb{CP}^{2}\#n\overline{\mathbb{CP}^{2}}$ with $n\leq 9$ does not admit scalar-flat K\"ahler metrics. 
\end{Remark}
For the existence part, LeBrun constructed explicit scalar-flat K\"ahler metrics on a ruled surface and its blown up,  
$S^{2}\times \Sigma_{\mathbf{g}}\#n\overline{\mathbb{CP}^{2}}$ for $n\geq 2$ and $\mathbf{g}\geq 2$ [13]
and Kim, Pontercorvo extended this result for $n\geq 1$ [11].
Kim, LeBrun and Pontecorvo constructed scalar-flat K\"ahler metrics on $\mathbb{CP}^{2}\#14\overline{\mathbb{CP}^{2}}$ and $S^{2}\times T^{2}\#n\overline{\mathbb{CP}^{2}}$
 for $n=6$ [10] using the result of Donaldson-Friedman [6] .
And Rollin and Singer improved this result on $\mathbb{CP}^{2}\#10\overline{\mathbb{CP}^{2}}$ and $S^{2}\times T^{2}\#n\overline{\mathbb{CP}^{2}}$ for $n=4$ [24].
Note that from the above remark 1, 10 is the minimum number for which scalar-flat K\"ahler metrics can exist on $\mathbb{CP}^{2}\#n\overline{\mathbb{CP}^{2}}$.
Also Kim, Pontercorvo proved one-point blow up of a non-minimal scalar-flat K\"ahler surface also
admits a scalar-flat K\"ahler metric [11]. Therefore, we get the following. 
\begin{Theorem}

[10], [11], [24] Let $M$ be diffeomorphic to one of $\mathbb{CP}^{2}\#n\overline{\mathbb{CP}^{2}}$ for $n \geq 10$.
Then $M$ admits a scalar-flat K\"ahler metric.
\end{Theorem}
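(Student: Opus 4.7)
The statement is essentially a compilation theorem, so my plan is to combine the three cited results in a short induction rather than to construct anything from scratch.

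The plan is to take $n=10$ as the base case and then build up all larger $n$ by iterated blow-up. For the base case I would invoke Rollin--Singer [24], who produce a scalar-flat K\"ahler metric on (a complex structure on) $\mathbb{CP}^{2}\#10\overline{\mathbb{CP}^{2}}$; this supplies the smallest $n$ for which Remark 1 does not immediately obstruct the existence of a scalar-flat K\"ahler metric, since $c_{1}^{2}=9-n$ forces $n\ge 10$ once $c_{1}\ne 0$ and $[\omega]$ pairs trivially with $c_{1}$.

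For the inductive step I would appeal to the theorem of Kim--Pontecorvo [11] stating that the one-point blow-up of a non-minimal scalar-flat K\"ahler surface again carries a scalar-flat K\"ahler metric. Assuming, by induction, that $\mathbb{CP}^{2}\#n\overline{\mathbb{CP}^{2}}$ admits a scalar-flat K\"ahler metric for some $n\ge 10$, the underlying complex surface is non-minimal because it contains the exceptional divisors of the iterated blow-ups, so the Kim--Pontecorvo result applies and produces a scalar-flat K\"ahler metric on a one-point blow-up, whose underlying smooth manifold is diffeomorphic to $\mathbb{CP}^{2}\#(n+1)\overline{\mathbb{CP}^{2}}$. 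Iterating this covers all $n\ge 10$.

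The only delicate point to check, and what I would treat as the main (small) obstacle, is that in applying Kim--Pontecorvo one stays in the correct diffeomorphism class: namely, that blowing up a scalar-flat K\"ahler complex surface whose underlying smooth manifold is $\mathbb{CP}^{2}\#n\overline{\mathbb{CP}^{2}}$ produces a smooth manifold diffeomorphic to $\mathbb{CP}^{2}\#(n+1)\overline{\mathbb{CP}^{2}}$, independently of the chosen point. This is standard (complex blow-up at a point is the smooth connected sum with $\overline{\mathbb{CP}^{2}}$), so one only needs to note it and then the induction goes through cleanly. Nothing else has to be verified, because the papers [10], [11], [24] supply all the analytic content, and the topological data ($\chi$, $\tau$, and hence $c_{1}^{2}$) were already tabulated in Example 1 and Remark 1.
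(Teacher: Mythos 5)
Your argument is exactly the one the paper relies on: the base case $n=10$ from Rollin--Singer [24] followed by iterated one-point blow-ups via the Kim--Pontecorvo result [11], with the standard identification of a complex blow-up with the smooth connected sum with $\overline{\mathbb{CP}^{2}}$. Your extra care about non-minimality and the diffeomorphism type of the blow-up is correct and fills in the details the paper leaves implicit.
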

We study deformation theory of a scalar-flat K\"ahler metric. 
We deform this metric in two different categories, namely in scalar-flat K\"ahler metrics and ASD metrics.
The obstruction for a deformation as ASD metrics lies in 
\[Coker DW_{+}\cong H^{2}(Z,\Theta_{Z}),\]
where $Z$ is the twistor space of $M$.
For a scalar-flat K\"ahler metric, Pontecorvo's result [23] gives us an additional structure on the twistor space.
Before stating this result, we explain about the twistor space briefly.

An oriented, 4-dimensional riemannian manifold $M$ with an ASD metric has its companion complex 3-dimensional manifold $Z$. 
This $Z$ is the total space of the sphere bundle of the vector bundle of self-dual 2-forms on $M$ and so we have a bundle map $\pi:Z\to M$. 
The Levi-Civita connection on $M$ induces the connection on $TZ$. 
Using this connection, we can split $TZ\cong \mathcal{H}\oplus\mathcal{V}$, where $\mathcal{H}$ is the horizontal part which is isomorphic to $\pi^{*}(TM)$
and $\mathcal{V}$ is the vertical part. Since fibers are 2-spheres, $\mathcal{V}$ has a natural almost-complex structure. 
Given a metric $g$ on $M$, self-dual 2-forms and $g$-orthogonal almost-complex structures on 
$T_{x}M$ correspond via the map
$\omega(v, w)=g(Jv, w)$. Thus, we can think of the fiber over $x$ as the set of all linear maps $J_{x}:TM_{x}\to TM_{x}$ such that $J_{x}^{2}=-1$. 
Suppose $z\in Z$ and $\pi(z)=x$. Then $\mathcal{H}_{z}=\pi^{*}(TM_{x})$ and since $z$ itself represent  a $J_{x}$ in $TM_{x}$, we can assign this $J_{x}$ on $\mathcal{H}_{z}$.
Thus, $TZ$ admits an almost-complex structure. 
The remarkable fact is that when $g$ is ASD, this almost-complex structure on $Z$ is integrable and therefore, the twistor space $Z$ becomes a complex manifold [2]. 
In addition to this, there is a fiberwise antipodal map 
\[\omega \to -\omega\]
and this gives us a fixed-point free anti-holomorphic involution $\sigma$ on the total space $Z$.

Suppose $M$ admits a scalar-flat K\"ahler metric $g$. In this case, we have the following Pontecorvo's result [23]. 
An almost-complex structure $J$ and its conjugate $-J$ give us embeddings of $M$ into $Z$ as complex hypersurfaces and we denote them by $\Sigma$ and 
$\overline{\Sigma}$
and their sum by $D=\Sigma+\overline{\Sigma}$. 
Then the anti-holomorphic involution $\sigma$ interchanges $\Sigma$ and $\overline{\Sigma}$ and so we have 
\[\sigma(\Sigma+\overline{\Sigma})=\overline{\Sigma}+\Sigma.\]
Thus, we get a real bundle $D=[\Sigma+\overline{\Sigma}]$. 
Then the result in [23] gives 
\[[D]\cong K^{-\frac{1}{2}}_{Z}.\]
Conversely, let $Z\to M$ be a twistor fibration and suppose we have a complex hypersurface $\Sigma\subset Z$ which meets every fiber at one point. 
Then $\Sigma$ is diffeomorphic to $M$ and we can think of $M$ as a complex surface induced from $\Sigma$.  
Suppose $[D]\cong K^{-\frac{1}{2}}_{Z}$, where $D=\Sigma+\overline{\Sigma}$. 
Then there is a metric $g$ in the conformal class $[g]$ such that $(M, g, J)$ is scalar-flat K\"ahler. 

Using this result, we have the following, which is originally discovered by Boyer [5] in a different way.

\begin{Theorem}

[5], [23] Let $M$ be an oriented, compact, smooth 4-manifold with an ASD metric $g$ and assume the first Betti number $b_{1}(M)$ is even. 
Suppose there is a complex structure $J$ such that $g(v,w)=g(Jv, Jw)$. 
Then the conformal class of g contains a unique scalar-flat K\"ahler metric. 
\end{Theorem}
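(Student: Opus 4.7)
The plan is to deduce the theorem from Pontecorvo's criterion recalled just above. Given the integrable, $g$--orthogonal complex structure $J$ on $M$, the map $x\mapsto J_x$ defines a smooth section $\Sigma$ of the twistor fibration $\pi\colon Z\to M$, and the antipodal involution $\sigma$ carries it to the section $\overline{\Sigma}$ associated with $-J$. Since $g$ is anti-self-dual, the almost-complex structure of $Z$ is integrable; a direct check using the integrability of $J$ shows that $\Sigma$ and $\overline{\Sigma}$ are complex hypersurfaces in $Z$, each meeting every twistor fiber transversally in exactly one point. Set $D=\Sigma+\overline{\Sigma}$, a $\sigma$--real effective divisor. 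By Pontecorvo's result, producing a scalar-flat K\"ahler representative of $[g]$ compatible with $J$ is equivalent to proving the holomorphic line-bundle isomorphism $\mathcal{O}_Z(D)\cong K_Z^{-1/2}$.

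I would verify this identity in two stages. First, the topological matching of first Chern classes. The normal bundle $N_{\Sigma/Z}\to\Sigma\cong M$ is naturally identified with $K_M^{-1}$ (its fiber at $J_x$ being $\Lambda^{0,2}T_x^*M$ via the standard description of the vertical tangent of $Z$), so adjunction gives $K_Z^{-1}|_\Sigma\cong K_M^{-2}$; combining this with the conjugate identity on $\overline{\Sigma}$ and a Leray calculation yields $c_1(\mathcal{O}_Z(D))=c_1(K_Z^{-1/2})$ in $H^2(Z,\mathbb{Z})$. Second, the holomorphic lifting. The ratio $L:=\mathcal{O}_Z(D)\otimes K_Z^{1/2}$ is a $\sigma$--real element of $\mathrm{Pic}^0(Z)=H^1(Z,\mathcal{O}_Z)/H^1(Z,\mathbb{Z})$; a Leray argument along $\pi$ identifies $H^1(Z,\mathcal{O}_Z)$ with $H^{0,1}(M)$ and the integral lattice with $H^1(M,\mathbb{Z})$ transported by Hodge symmetry. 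The hypothesis $b_1(M)\in 2\mathbb{Z}$ is exactly what forces the $\sigma$--invariant subgroup of this torus to collapse, so $L$ is holomorphically trivial and Pontecorvo's criterion delivers the desired scalar-flat K\"ahler metric in $[g]$.

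For uniqueness, any two scalar-flat K\"ahler metrics in the conformal class compatible with $J$ are related by $\omega'=e^{2f}\omega$; closure of $\omega'$ yields $df\wedge\omega=0$, and since wedge product with the symplectic form $\omega$ is injective on 1-forms of a connected 4-manifold, $f$ must be constant, so the metric is unique up to the homothety absorbed into the normalization $|\omega|=\sqrt{2}$ of Lemma 1.

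The main obstacle is plainly the holomorphic lifting step: the topological comparison of Chern classes is formal, but passing from equality in $H^2(Z,\mathbb{Z})$ to a genuine holomorphic isomorphism of line bundles requires careful bookkeeping of the $\sigma$--equivariant Picard group of the twistor space. This is precisely the place where the parity assumption on $b_1(M)$ does all the work; without it, Boyer's original classification shows that the lifting can genuinely fail, and no scalar-flat K\"ahler representative need exist in $[g]$.
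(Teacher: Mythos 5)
The paper offers no proof of this statement: it is quoted from Boyer [5] and Pontecorvo [23], immediately after recalling Pontecorvo's criterion that the conformal class contains a scalar-flat K\"ahler representative if and only if $[D]\cong K_{Z}^{-1/2}$ for $D=\Sigma+\overline{\Sigma}$. Your overall route --- build $\Sigma$, $\overline{\Sigma}$ as the complex hypersurfaces cut out by $\pm J$, reduce to the line-bundle identity, check $c_{1}$ first and then the holomorphic lift --- is exactly the route the paper gestures at, and the topological half is fine: $N_{\Sigma/Z}\cong K_{M}^{-1}$, adjunction, degree $2$ on twistor lines, and the splitting of $H^{2}(Z,\mathbb{Z})$ coming from the existence of a section together do give $c_{1}(\mathcal{O}_{Z}(D))=c_{1}(K_{Z}^{-1/2})$.

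The genuine gap is the step you yourself flag as the main obstacle. You assert that the parity of $b_{1}(M)$ ``forces the $\sigma$-invariant subgroup of this torus to collapse,'' so that the degree-zero bundle $L=\mathcal{O}_{Z}(D)\otimes K_{Z}^{1/2}$ is trivial. That mechanism is false whenever $b_{1}(M)>0$: for the ruled surfaces over $\Sigma_{\mathbf{g}}$ and $T^{2}$ that are the principal examples of this paper, $b_{1}=2\mathbf{g}$ is even and positive, $H^{1}(Z,\mathcal{O}_{Z})$ is then nonzero, and the $\sigma$-real locus of $\mathrm{Pic}^{0}(Z)$ is a positive-dimensional real torus --- a real point of $\mathrm{Pic}^{0}(Z)$ need not be the trivial bundle. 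So triviality of $L$ does not follow from what you have written, and without it Pontecorvo's criterion gives nothing. The actual argument (in either source) identifies the obstruction not with a parity count on the Picard torus but with the de Rham class of the Lee form $\theta$ defined by $d\omega=\theta\wedge\omega$: anti-self-duality forces $d\theta=0$, and one shows, using Gauduchon's degree or the Hodge theory of compact complex surfaces with $b_{1}$ even, that $[\theta]=0$, so the metric is globally conformally K\"ahler; Proposition 2 of the paper then makes the K\"ahler representative scalar-flat. That is the input you would need to supply. A smaller point: your uniqueness argument correctly shows the conformal factor is constant, so the scalar-flat K\"ahler representative is unique only up to homothety (and a priori only among metrics K\"ahler for the \emph{same} $J$); the normalization $|\omega|=\sqrt{2}$ does not remove the homothety, since it is conformally invariant for the metric associated to $(\omega,J)$.
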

From this theorem, we can conclude that deforming scalar-flat K\"ahler metrics is equivalent to deforming ASD hermitian conformal structures. 
The latter correspond to the deformation of the pair $(Z, D)$ preserving the real structure. 
Therefore, when obstruction vanishes, the moduli space of the ASD hermitian conformal structures is the real slice of $H^{1}(Z, \Theta_{Z,D})$.
Details can be found in [17]. From Theorem 5, we can conclude the following. 
\begin{Lemma}
When obstruction vanishes, the deformation of scalar-flat K\"ahler metrics corresponds to the real slice of  $H^{1}(Z,\Theta_{Z,D})$, where $\Theta_{Z,D}$ is the sheaf of holomorphic vector fields
on $Z$ which are tangent to $D$.
And its obstruction lies in $H^{2}(Z,\Theta_{Z,D})$.
\end{Lemma}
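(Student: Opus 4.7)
The plan is to reduce the statement to a Kodaira--Spencer deformation problem for the pair $(Z,D)$ on the twistor space, and then cut out the real slice.

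First I would use Theorem 5 to reformulate the problem in twistor-theoretic terms. By that theorem, each ASD Hermitian conformal class on $M$ contains a unique scalar-flat K\"ahler representative, so deforming a scalar-flat K\"ahler metric is equivalent to deforming the pair consisting of an ASD conformal class $[g]$ together with a compatible integrable complex structure $J$. Via the twistor construction, $[g]$ is encoded by the complex $3$-fold $Z$ with its real structure $\sigma$, and by Pontecorvo's theorem [23] the extra datum $J$ corresponds to a smooth complex hypersurface $\Sigma\subset Z$ meeting every twistor fibre in a single point, with $\sigma(\Sigma)=\overline{\Sigma}$ and $[D]\cong K_Z^{-1/2}$ for $D=\Sigma+\overline{\Sigma}$. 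Hence scalar-flat K\"ahler deformations on $M$ are in bijection with $\sigma$-equivariant deformations of the pair $(Z,D)$.

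Next I would invoke the Kodaira--Spencer theory of deformations of a compact complex manifold together with a divisor. The sheaf controlling simultaneous deformations of $Z$ and of the submanifold $D\subset Z$ is precisely $\Theta_{Z,D}$, the subsheaf of $\Theta_Z$ consisting of holomorphic vector fields tangent to $D$: away from $D$ it agrees with $\Theta_Z$, while along $D$ it enforces that the deforming cocycle preserves the ideal sheaf $\mathcal{I}_D$. Running the standard \v{C}ech/graded argument for the pair yields that first-order deformations are classified by $H^1(Z,\Theta_{Z,D})$ and that obstructions to integrability lie in $H^2(Z,\Theta_{Z,D})$. Finally the antiholomorphic involution $\sigma$ preserves $D$ and therefore acts as an antilinear involution on these cohomology groups; only $\sigma$-fixed classes descend to genuine deformations of the real structure on $M$, which gives the real slice statement.

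The step I expect to be delicate is verifying that the twistor-to-base correspondence really is bijective on small deformations, i.e.\ that an arbitrary $\sigma$-invariant deformation of $(Z,D)$ still consists of a twistor fibration together with a section-like hypersurface satisfying $[D]\cong K_{Z}^{-1/2}$. This is essentially an openness claim: the properties that $\Sigma$ meets each twistor fibre transversely in one point and that $\mathcal{O}(D)\cong K_Z^{-1/2}$ are both open conditions in the analytic topology and are automatically preserved by small $\sigma$-equivariant deformations, so once this openness is confirmed the identification with $H^1(Z,\Theta_{Z,D})^{\sigma}$ (with obstructions in $H^2(Z,\Theta_{Z,D})^{\sigma}$) is immediate, and the lemma follows.
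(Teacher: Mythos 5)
Your proposal is correct and follows essentially the same route as the paper, which states this lemma without a formal proof: it derives it from Theorem 5 (Boyer/Pontecorvo) together with the standard Kodaira--Spencer theory of deformations of the pair $(Z,D)$ preserving the real structure, deferring details to LeBrun--Singer [17]. The delicate openness point you flag (that small $\sigma$-equivariant deformations remain twistor spaces with $[D]\cong K_{Z}^{-1/2}$) is precisely what [17] supplies, so your sketch matches the paper's intended argument.
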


\begin{Remark}
Note that deformation of scalar-flat K\"ahler metrics with a fixed complex structure corresponds to the sheaf $\Theta_{Z}\otimes\mathcal{I}_{D}$.
\end{Remark}
Here we denote the ideal sheaf of $D$ by $\mathcal{I}_{D}$. 
LeBrun and Singer identifies $H^{i}(Z,\Theta\otimes\mathcal{I}_{D})$ using the Penrose transform [17].
In particular, if there is no holomorphic vector field or the complex surface is non-minimal, 
it is shown that the obstruction vanishes [10],  [17].  
Below is the result by LeBrun and Singer [17].

\begin{Proposition}

[17] Let $(M,\omega)$ be a compact scalar-flat K\"ahler surface and let $Z$ be its twistor space.
Suppose $c_{1}\neq0$.
If $H^{2}(Z, \Theta\otimes\mathcal{I}_{D})=0$, then $H^{2}(Z, \Theta_{Z,D})=H^{2}(Z,\Theta_{Z})=0$.
\end{Proposition}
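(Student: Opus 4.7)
My plan is to deduce both vanishings from the hypothesis by chasing two short exact sequences on $Z$ and reducing everything to cohomology on $M$. Since $c_1 \neq 0$, Yau's theorem (Theorem 4) forces $M$ to be a ruled surface, and this is the structural fact I will exploit. As a preliminary, I observe that the anti-holomorphic involution $\sigma$ is fixed-point free, so $\Sigma$ and $\overline{\Sigma}$ are disjoint; consequently $\mathcal{O}_Z(\overline{\Sigma})|_{\Sigma}$ is trivialized by the canonical section cutting out $\overline{\Sigma}$, and Pontecorvo's relation $\mathcal{O}_Z(D) \cong K_Z^{-1/2}$ together with adjunction yields $N_{\Sigma/Z} \cong K_\Sigma^{-1}$.

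For the first equality $H^2(Z, \Theta_{Z,D}) = 0$, I take the long exact sequence associated to
\[
0 \longrightarrow \Theta_Z \otimes \mathcal{I}_D \longrightarrow \Theta_{Z,D} \longrightarrow \Theta_D \longrightarrow 0,
\]
which together with the hypothesis embeds $H^2(Z, \Theta_{Z,D})$ into $H^2(D, \Theta_D) \cong H^2(M, \Theta_M) \oplus H^2(M, \Theta_M)$. For the second equality $H^2(Z, \Theta_Z) = 0$, I instead use
\[
0 \longrightarrow \Theta_Z \otimes \mathcal{I}_D \longrightarrow \Theta_Z \longrightarrow \Theta_Z|_D \longrightarrow 0,
\]
whose long exact sequence embeds $H^2(Z, \Theta_Z)$ into $H^2(D, \Theta_Z|_D)$. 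To control the latter, I restrict on each component the standard normal bundle sequence
\[
0 \longrightarrow \Theta_\Sigma \longrightarrow \Theta_Z|_\Sigma \longrightarrow N_{\Sigma/Z} \longrightarrow 0,
\]
which, together with $N_{\Sigma/Z} \cong K_\Sigma^{-1}$, reduces everything to showing $H^2(M, \Theta_M) = 0$ and $H^2(M, K_M^{-1}) = 0$.

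Both vanishings on $M$ follow via Serre duality, which identifies $H^2(M, \Theta_M)^{*} \cong H^0(M, \Omega^1_M \otimes K_M)$ and $H^2(M, K_M^{-1})^{*} \cong H^0(M, K_M^{\otimes 2})$. On a ruled surface the general fiber $F$ is a $\mathbb{CP}^1$ with $K_M|_F = \mathcal{O}(-2)$ and $\Omega^1_M|_F = \mathcal{O}(-2) \oplus \mathcal{O}$, so both $(\Omega^1_M \otimes K_M)|_F$ and $(K_M^{\otimes 2})|_F$ split as sums of line bundles of strictly negative degree; a global section must therefore vanish on every fiber and hence vanish identically. The most delicate step will be the normal-bundle identification $N_{\Sigma/Z} \cong K_\Sigma^{-1}$, which is where the scalar-flat K\"ahler condition genuinely enters through Pontecorvo's characterization $[D] = K_Z^{-1/2}$; once that is in hand the rest of the argument is a clean sheaf-cohomology chase. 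The hypothesis $c_1 \neq 0$ is used exactly to guarantee the ruled-surface structure, without which the necessary pluri-canonical sections on $M$ need not vanish.
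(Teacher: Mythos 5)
Your argument is correct and follows essentially the same route as the paper: both reduce the statement to the vanishing of $H^{2}(D,\Theta_{D})$ and $H^{2}(D,N_{D})$ by chasing the short exact sequences relating $\Theta_{Z}\otimes\mathcal{I}_{D}$, $\Theta_{Z,D}$, $\Theta_{Z}$, $\Theta_{D}$ and $N_{D}$ (your restriction-plus-normal-bundle sequence for the second step is just a repackaging of the paper's sequence $0\to\Theta_{Z,D}\to\Theta_{Z}\to N_{D}\to 0$). The only real difference is that the paper cites [17] for those two vanishings on $D\cong M\sqcup\overline{M}$, whereas you prove them directly from the ruled-surface structure via $N_{\Sigma/Z}\cong K_{\Sigma}^{-1}$, Serre duality, and restriction to generic fibers -- a correct and self-contained addition (for non-minimal ruled surfaces one only needs vanishing on the dense set of irreducible fibers, which your argument gives).
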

This proposition is proved by considering the following short exact sequences of sheaves and showing that $H^{2}(D, \Theta_{D})=0$ and  $H^{2}(D, N_{D})=0$. 
\begin{equation}
0\longrightarrow \Theta_{Z,D}\longrightarrow \Theta_{Z} \longrightarrow N_{D}\longrightarrow 0,
\end{equation}
\begin{equation}
0\longrightarrow \Theta_{Z}\otimes \mathcal{I}_{D}\longrightarrow \Theta_{Z,D} \longrightarrow \Theta_{D}\longrightarrow 0.
\end{equation}
From (6), we get the following long exact sequence, 
\[\cdot\cdot\cdot\longrightarrow H^{2}(Z, \Theta_{Z}\otimes\mathcal{I}_{D})\longrightarrow H^{2}(Z, \Theta_{Z,D})\longrightarrow H^{2}(D, \Theta_{D})\longrightarrow\cdot\cdot\cdot \]
Thus, if $H^{2}(Z, \Theta_{Z}\otimes\mathcal{I}_{D})=0$, then
\[H^{2}(Z, \Theta_{Z,D})=0.\]
Similarly, from (5), we get 
\[\cdot\cdot\cdot\longrightarrow H^{2}(Z, \Theta_{Z,D})\longrightarrow H^{2}(Z, \Theta_{Z})\longrightarrow H^{2}(D, N_{D})\longrightarrow\cdot\cdot\cdot \]
Thus, when $H^{2}(Z, \Theta_{Z,D})=0$, we have $H^{2}(Z, \Theta_{Z})=0$. 

Thus, when $H^{2}(Z, \Theta\otimes\mathcal{I}_{D})=0$,  
obstructions of deformation of scalar-flat K\"ahler metrics and ASD metrics vanish. 
In particular, if there is no holomorphic vector field, then $H^{2}(Z, \Theta_{Z}\otimes\mathcal{I}_{D})=0$ by [17].
When $M$ is a scalar-flat K\"ahler surface but not Ricci-flat, it is shown in [23] 
\[H^{0}(M, \Theta_{M})=H^{0}(Z, \Theta_{Z}).\]
Thus, when there is no holomorphic vector field on $M$, there is no holomorphic vector field on $Z$.
In particular, $H^{0}(Z, \Theta_{Z})=H^{0}(Z, \Theta_{Z, D})=0$. 
Therefore, under this assumption, we get a smooth moduli space of scalar flat K\"ahler metrics and ASD metrics
and their dimensions can be calculated by the index theorem.

\begin{Theorem}
Let $(M, g, J, \omega)$ be a complex surface with a scalar-flat K\"ahler metric. 
Suppose $c_{1}\neq 0$ and assume $(M, J)$ admits no holomorphic vector fields. 
Then on these surfaces, we have $-9\chi\geq 13\tau$ and if the inequality is strict, there exist deformations which are strictly almost-K\"ahler. 
\end{Theorem}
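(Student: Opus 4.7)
The plan is to perform a parameter count comparing two moduli spaces, and then use Theorem 5 to promote a ``non-SFK'' ASD deformation to a strictly almost-K\"ahler structure. Under our hypotheses ($c_{1}\neq 0$ and no holomorphic vector fields on $(M,J)$), the discussion following Proposition 4 yields $H^{0}(Z,\Theta_{Z}) = H^{0}(Z,\Theta_{Z,D}) = 0$ and $H^{2}(Z,\Theta_{Z}) = H^{2}(Z,\Theta_{Z,D}) = 0$. Hence both the ASD moduli space $\mathcal{M}^{\mathrm{ASD}}$ and the scalar-flat K\"ahler moduli space $\mathcal{M}^{\mathrm{SFK}}$ are smooth in a neighborhood of $[g]$, modelled respectively on the real slices of $H^{1}(Z,\Theta_{Z})$ and $H^{1}(Z,\Theta_{Z,D})$; moreover, by Theorem 5 (noting that $b_{1}(M)$ is even on a scalar-flat K\"ahler surface) the natural local inclusion $\mathcal{M}^{\mathrm{SFK}} \hookrightarrow \mathcal{M}^{\mathrm{ASD}}$ holds, so a strict gap in dimensions will produce ASD deformations not conformal to any scalar-flat K\"ahler metric.

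The next step is the explicit dimension count. Given the vanishings above, $\dim_{\mathbb{C}} H^{1}(Z,\Theta_{Z}) = -\chi(Z,\Theta_{Z})$ and $\dim_{\mathbb{C}} H^{1}(Z,\Theta_{Z,D}) = -\chi(Z,\Theta_{Z,D})$, each of which can be computed via Hirzebruch-Riemann-Roch on $Z$. The short exact sequences (5) and (6), together with the Penrose-transform identification of $H^{\ast}(D,\Theta_{D})$ and $H^{\ast}(D,N_{D})$ from [17] and the standard expressions for the Chern classes of a twistor space in terms of $\chi(M)$ and $\tau(M)$, should produce both dimensions as explicit affine functions of $\chi$ and $\tau$. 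The inequality $\dim \mathcal{M}^{\mathrm{ASD}} \geq \dim \mathcal{M}^{\mathrm{SFK}}$ is then expected to reduce exactly to $-9\chi \geq 13\tau$.

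Finally, when the inequality is strict, I pick a nearby ASD conformal class $[g']$ not in $\mathcal{M}^{\mathrm{SFK}}$ and construct the almost-K\"ahler structure as follows. The $b_{+}(M)$-dimensional space of self-dual harmonic 2-forms depends smoothly on the metric, so there is a well-defined $g'$-self-dual harmonic 2-form $\omega'$ close to $\omega$, which is nowhere vanishing for $[g']$ near $[g]$ since $|\omega|\equiv\sqrt{2}$ by Lemma 1. Self-duality of 2-forms in dimension four being conformally invariant, and a closed self-dual form being automatically harmonic, I may conformally rescale within $[g']$ to arrange $|\omega'|^{2}\equiv 2$ pointwise while preserving the ASD condition; the resulting $(g',\omega')$ defines a compatible almost-complex structure $J'$ via $\omega'(\cdot,\cdot) = g'(J'\cdot,\cdot)$, yielding an almost-K\"ahler ASD structure. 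If $J'$ were integrable, Theorem 5 would place $[g']$ back in $\mathcal{M}^{\mathrm{SFK}}$, contradicting our choice; hence $J'$ is non-integrable and $(g',J',\omega')$ is strictly almost-K\"ahler.

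The hardest step is the dimension calculation itself: carrying out the Riemann-Roch computation on the non-K\"ahler twistor space $Z$, correctly bookkeeping the cohomology of the normal bundle and tangent bundle of $D$, and matching the resulting algebraic inequality to $-9\chi \geq 13\tau$ on the nose, rather than to some weaker or stronger cousin.
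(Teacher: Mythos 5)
Your overall strategy coincides with the paper's: vanishing of $H^{0}$ and $H^{2}$ under the hypotheses, comparison of the dimensions of the two smooth moduli spaces, and promotion of a non-scalar-flat-K\"ahler ASD deformation to a strictly almost-K\"ahler one via the harmonic self-dual form, conformal rescaling to $|\omega'|=\sqrt{2}$, and Theorem 5 (Boyer--Pontecorvo) to rule out integrability. Your last step is essentially the paper's Lemma 5, and your observation that the local inclusion $\mathcal{M}^{\mathrm{SFK}}\hookrightarrow\mathcal{M}^{\mathrm{ASD}}$ already forces $-9\chi\geq 13\tau$ is a cleaner justification of the non-strict inequality than the paper's, which only verifies it case by case on the examples.

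The genuine gap is that the central computation --- the one that makes the theorem say $-9\chi\geq 13\tau$ rather than some other linear inequality in $\chi$ and $\tau$ --- is never carried out: you write that Riemann--Roch on $Z$ ``should produce'' the two dimensions and that the comparison ``is then expected to reduce'' to the stated inequality. That is the entire quantitative content of the statement, and you flag it yourself as the hardest step. Moreover, direct Hirzebruch--Riemann--Roch on the (generally non-K\"ahler) twistor space is not how the paper proceeds; instead it uses the two exact sequences to reduce everything to facts already in hand: $\chi(Z,\Theta_{Z}\otimes\mathcal{I}_{D})=\tau$ from LeBrun--Singer, $D\cong\Sigma+\overline{\Sigma}$ (two copies of $M$) with $N_{D}\cong K_{D}^{-1}$ from Pontecorvo, and ordinary Riemann--Roch on the surface $M$ giving $\chi(M,\Theta_{M})=\frac{3\chi}{2}+\frac{7\tau}{2}$ and $\chi(M,K_{M}^{-1})=\frac{9\chi+13\tau}{4}$. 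These combine to $-\chi(Z,\Theta_{Z,D})=-3\chi-8\tau$ and $-\chi(Z,\Theta_{Z})=-\frac{1}{2}(15\chi+29\tau)$, whose difference is $\frac{1}{2}(-9\chi-13\tau)$; positivity of that difference is exactly $-9\chi>13\tau$. Without performing this reduction (or an equivalent index computation), your argument establishes only that \emph{some} dimension gap criterion yields strictly almost-K\"ahler deformations, not the specific inequality asserted in the theorem.
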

\begin{proof}
Consider the short exact sequence (6). Since $D\cong\Sigma+\overline{\Sigma}$, we have 
\[\chi(Z, \Theta_{Z,D})=\chi(Z, \Theta_{Z}\otimes \mathcal{I}_{D})+2\chi(M, \Theta_{M}).\]
When obstruction vanishes, we can think of $-\chi$ as the dimension of the moduli space.
It is shown in [17] that 
\[\chi(Z, \Theta_{Z}\otimes I_{D})=\tau.\]
Thus, the dimension the moduli space of scalar-flat K\"ahler metrics is given by
\[-\chi(Z, \Theta_{Z,D})=-\tau-2\chi(M, \Theta_{M}).\]
By the index theorem, $\chi(M,\Theta_{M})$ is given by
\[\chi(M,\Theta_{M})=\int_{M}(1+\frac{c_{1}}{2}+\frac{c_{1}^{2}+c_{2}}{12})(2+c_{1}+\frac{c_{1}^{2}-2c_{2}}{2})\]
\[=(\frac{c_{1}^{2}+c_{2}}{6}+\frac{c_{1}^{2}}{2}+\frac{c_{1}^{2}}{2}-c_{2})(M)=(\frac{7}{6}c_{1}^{2}-\frac{5}{6}c_{2})(M)\]
\[=\frac{7(2\chi+3\tau)-5\chi}{6}=\frac{9\chi+21\tau}{6}=\frac{3\chi}{2}+\frac{7\tau}{2}.\]
Therefore, the expected dimension of the moduli space of scalar-flat K\"ahler metrics is 
\[-\chi(Z, \Theta_{Z,D})=-\tau-2\chi(M, \Theta_{M})=-\tau-(3\chi+7\tau)=-3\chi-8\tau.\]

Consider another short exact sequence (5).
By Pontecorvo's result [23], we have $N_{D}=K^{-1}_{D}$.
From this, we get
\[\chi(Z, \Theta_{Z})=\chi(Z, \Theta_{Z, D})+2\chi(M, K^{-1}_{M}).\]
$\chi(M, K^{-1}_{M})$ is given by the Riemann-Roch formula,
\[\chi(L)=\chi(\mathcal{O}_{M})+\int_{M}\frac{c_{1}(L)\left(c_{1}(L)+c_{1}(K^{-1})\right)}{2},\]
where $L$ is any holomorphic line bundle on $M$.
If $L=K^{-1}$, we get
\[\chi(K^{-1})=\chi(\mathcal{O}_{M})+c_{1}(K^{-1})\cdot c_{1}(K^{-1})=\chi(\mathcal{O}_{M})+c_{1}^{2}(M).\]
Since $\chi(\mathcal{O}_{M})=\frac{c_{1}^{2}+c_{2}}{12}$, we have 
\[\chi(K^{-1})=\frac{c_{1}^{2}+c_{2}}{12}+c_{1}^{2}.\]
Using $c_{1}^{2}(M)=2\chi+3\tau$ and $c_{2}(M)=\chi$, we get
\[\chi(M, K^{-1}_{M})=\frac{2\chi+3\tau+\chi}{12}+2\chi+3\tau\]
\[=\frac{1}{4}\chi+\frac{1}{4}\tau+2\chi+3\tau.\]
Then we have 
\[\chi(Z, \Theta_{Z})=\chi(Z, \Theta_{Z, D})+2\chi(M, K^{-1}_{M})\]
\[=3\chi+8\tau+\frac{1}{2}\chi+\frac{1}{2}\tau+4\chi+6\tau\]
\[=\frac{1}{2}(15\chi+29\tau).\]
Thus, when obstruction vanishes, we can conclude the dimension of the moduli space of ASD metrics is given by 
\[-\frac{1}{2}(15\chi+29\tau).\]
Using Lemma 5 below, if 
\[-\frac{1}{2}(15\chi+29\tau)>-(3\chi+8\tau),\]
that is, if $-9\chi>13\tau$, then there exist strictly almost-K\"ahler anti-self-dual metrics. 
In the examples below, we can easily check all scalar-flat K\"ahler surfaces with $c_{1}\neq 0$ satisfy $-9\chi\geq13\tau$.
\end{proof}

\begin{Remark}
The dimension of the moduli of ASD metrics, $-\frac{1}{2}(15\chi+29\tau)$, is given first by I. M. Singer using the Atiyah-Singer Index theorem [8].
Also the dimension of the moduli of scalar-flat K\"ahler metrics, $-(3\chi+8\tau),$ has been known to experts, for example in another version of [11], 
but it seems it has not been written down in detail. 
\end{Remark}

In the following lemma, we show that there is a unique almost-K\"ahler ASD metric in each conformal class which is close to the one containing a scalar-flat K\"ahler metric. 
\begin{Lemma}
Let $M$ be a compact, smooth 4-dimensional manifold which admits a scalar-flat K\"ahler metric.
Assume $M$ does not admit a holomorphic vector field. 
Suppose $g'$ be a small deformation of $g$. 
Then there is a unique almost-K\"ahler anti-self-dual metric in the conformal class of $[g']$.  
\end{Lemma}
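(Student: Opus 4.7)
My strategy is to produce the almost-K\"ahler ASD metric as an explicit conformal rescaling of $g'$, using a harmonic self-dual $2$-form for $g'$ obtained by continuously deforming the K\"ahler form of $g$.

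I would begin by tracking the K\"ahler form $\omega$ of $g$ under the deformation. By Lemma 1, $\omega$ is a nowhere vanishing self-dual harmonic $2$-form with $|\omega|_g=\sqrt 2$. The $L^2$-orthogonal projection onto harmonic forms depends continuously on the metric (and $b_+$ is a topological invariant), so for $g'$ sufficiently close to $g$ there is a harmonic representative $\omega'$ of the fixed class $[\omega]\in H^2(M;\mathbb R)$ with respect to $g'$ that is $C^0$-close to $\omega$. Compactness of $M$ then forces $\omega'$ to be nowhere vanishing.

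Next I would conformally rescale to normalize the pointwise norm. In dimension $4$ the splitting $\Lambda^2=\Lambda^+\oplus\Lambda^-$ is conformally invariant, as is the condition $W_+=0$, and a $2$-form norm transforms as $|\alpha|^2_{e^{2f}g'}=e^{-4f}|\alpha|^2_{g'}$. The choice
\[
\tilde g \;=\; \tfrac{1}{\sqrt 2}\,|\omega'|_{g'}\,g'
\]
then yields an ASD metric in $[g']$ for which $\omega'$ remains closed and self-dual with $|\omega'|_{\tilde g}=\sqrt 2$ identically. Defining $J$ pointwise by $\omega'(v,w)=\tilde g(Jv,w)$ produces the unique almost complex structure compatible with $\tilde g$ such that the associated $2$-form is $\omega'$, so $(\tilde g,J,\omega')$ is an almost-K\"ahler ASD structure.

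For uniqueness, suppose $\tilde g''=e^{2h}g'$ is another almost-K\"ahler ASD metric in $[g']$ with compatible symplectic form $\omega''$. By conformal invariance of $\Lambda^+$, the form $\omega''$ is closed and self-dual for $g'$, and $d^*\omega''=-{*}d\,{*}\omega''=-{*}d\omega''=0$ shows it is harmonic for $g'$. Pinning down the cohomology class (preserved under small deformation) forces $\omega''=\omega'$, after which $|\omega''|_{\tilde g''}=\sqrt 2$ uniquely determines $e^{2h}=|\omega'|_{g'}/\sqrt 2$, giving $\tilde g''=\tilde g$. The main technical hurdle is the first step: one must argue that the metric-dependent harmonic projection varies continuously enough in $C^0$ for the non-vanishing of $\omega'$ to persist; once that is secured, the rest reduces to conformal invariance in dimension $4$ and the standard linear-algebra correspondence between unit-norm self-dual $2$-forms and compatible orthogonal almost complex structures.
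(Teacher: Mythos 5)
Your proposal follows essentially the same route as the paper: take the $g'$-harmonic self-dual $2$-form near the K\"ahler form (nondegenerate by continuity and compactness), conformally rescale so its pointwise norm is $\sqrt{2}$, and read off the compatible $J$ from the standard correspondence between unit-length self-dual forms and orthogonal almost-complex structures. You go further than the paper by sketching uniqueness, though the step ``pinning down the cohomology class forces $\omega''=\omega'$'' really needs $b_+=1$ (or some argument that the competing symplectic form is proportional to $\omega'$), and even then the construction only determines the metric up to an overall constant scale.
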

\begin{proof}
Since $g'$ is close to the K\"ahler metric, there is a self-dual harmonic 2-form $\omega_{g}'$ is nondegenerate. 
Then by conformal rescaling, we can find a metric $\tilde{g}$ in the conformal class of $g'$ such that $|\omega_{\tilde{g}}|=\sqrt{2}$. 
Then in terms of an orthonormal basis $\{e_{1}, e_{2}, e_{3}, e_{4}\}$, $\omega_{\tilde{g}}=e_{1}\wedge e_{2}+e_{3}\wedge e_{4}$.
Define $Je_{1}=e_{2}, Je_{2}=-e_{1}, Je_{3}=e_{4}, Je_{4}=-e_{3}$. 
Then, $\omega(X, Y)=\tilde{g}(JX, Y)$ and $J$ is $\omega$ and $\tilde{g}$-compatible.

\end{proof}

\begin{Example}
Let us consider $\mathbb{CP}^{2}\#n\overline{\mathbb{CP}^{2}}$.
When there is no holomorphic vector field, 
the dimension the moduli space of scalar-flat K\"ahler metrics is given by 
\[-3\chi-8\tau=-3(3+n)+8(n-1)\]
\[=-9-3n+8n-8=-17+5n.\]
On the other hand, the dimension of the moduli space of ASD metrics is given by 
\[-\frac{(15\chi+29\tau)}{2}=-\frac{\left(15(3+n)+29(1-n)\right)}{2}\]
\[=-\frac{(45+15n+29-29n)}{2}=7n-37.\]
Therefore, we have
\[7n-37-(-17+5n)\geq 0 \iff n\geq 10\]
and equality holds if and only if $n=10$.
This observation with Lemma 5 tells us that for $\mathbb{CP}^{2}\#n\overline{\mathbb{CP}^{2}}$, 
the dimension of the moduli space of almost-K\"ahler ASD metrics is greater than or equal to 
the dimension of the moduli of scalar-flat K\"ahler metrics
if and only if $n\geq 10$ and they are equal when $n=10$.
Thus, when $n>10$, there exists a strictly almost-K\"ahler ASD metric.
\end{Example}
\begin{Example}
Let us consider $S^{2}\times T^{2}\#n\overline{\mathbb{CP}^{2}}$.
Using the fact that $\chi(S^{2}\times T^{2})=\chi(S^{2})\chi(T^{2})=0$ and $\chi(\mathbb{CP}^{2})=3$, 
we get
\[\chi(S^{2}\times T^{2}\#n\overline{\mathbb{CP}^{2}})=\chi(S^{2}\times T^{2})+n(\chi(\overline{\mathbb{CP}^{2}})-2)=n\]
Since $\tau=-n$, the dimension of the moduli space of scalar-flat K\"ahler metrics is given by 
\[-3\chi-8\tau=-3n+8n=5n\]
and the dimension the moduli space of ASD metrics is equal to 
\[-\frac{(15\chi+29\tau)}{2}=-\frac{(15(n)+29(-n))}{2}=7n\]
Thus, when $n\geq 1$, we have a strictly almost-K\"ahler ASD metric.
Note that we only know that scalar-flat K\"ahler metric exist on $S^{2}\times T^{2}\#n\overline{\mathbb{CP}^{2}}$ for $n\geq 4$ [24].

By a similar calculation, we can show that the dimension of the moduli space of almost-K\"ahler ASD metrics is greater than the one of  scalar-flat K\"ahler metrics
in case of $S^{2}\times \Sigma_{\mathbf{g}}\#n\overline{\mathbb{CP}^{2}}$ for $\mathbf{g}\geq 2$. 
And note that LeBrun showed the existence of scalar-flat K\"ahler metric explicitly when $n\geq 2$ [13]
and Kim, Pontecorvo showed the existence of such a metric for $n\geq 1$ [11]. 
\end{Example}

\begin{Example}
Let us consider a minimal ruled surface $\mathbb{V}$ over $\Sigma_{\mathbf{g}}$., where $\mathbf{g}\geq 2$ and $\mathbb{V}$ is a holomorphic rank 2 bundle. 
\end{Example}
When $S^{2}\times \Sigma_{\mathbf{g}}$ admits a standard product K\"ahler metric, there is a holomorphic vector field.
Let us consider projectivization of a rank 2 holomorphic vector bundle $\mathbb{V}$ over $\Sigma_{\mathbf{g}}$.
When $\mathbb{V}$ is a  stable vector bundle over $\Sigma_{\mathbf{g}}$, it can be shown that there is no holomorphic vector field on the ruled surface $P(\mathbb{V})\to \Sigma_{\mathbf{g}}$ [7]. By Narasimhan-Seshadri theorem [21], there is a flat connection on $P(\mathbb{V})$ and therefore, 
universal cover of $P(\mathbb{V})$ is $S^{2}\times \mathcal{H}^{2}$.
Then locally, $P(\mathbb{V})$ is $S^{2}\times\Sigma_{\mathbf{g}}$, where the standard K\"ahler metric is given. 
Therefore, $P(\mathbb{V})$ admits a scalar-flat K\"ahler metric. 

On the other hand, suppose $P(\mathbb{V})$ admits a scalar-flat K\"ahler metric. 
It is shown in [4], [12] that $P(\mathbb{V})$ is locally riemannian product $S^{2}\times\Sigma_{\mathbf{g}}$ with the standard metric.
We briefly discuss the proof. 
Note that in this case, $\tau=0$, and therefore, we have $b_{+}=b_{-}$ and $W_{+}=W_{-}$. 
Thus, there is a self-dual harmonic 2-form $\omega$ and also an anti-self-dual harmonic 2-form $\varphi$. 
From the equation (2), we can conclude $\nabla\omega=0$.
For an anti-self-dual harmonic form $\rho$, we have 
\[0=<\nabla^{*}\nabla\varphi,\varphi>-2W_{-}(\varphi, \varphi)+\frac{s}{3}|\varphi|^{2}.\]
Thus, we can conclude $\nabla\varphi=0$. 
Since there are two parallel 2-forms, the holonomy is a subgroup of $SO(2)\times SO(2)$ and thus we get the conclusion.

In sum, we can conclude scalar-flat K\"ahler metrics on $P(\mathbb{V})$ correspond to the following representation up to conjugation. 
\[\rho: \pi_{1}(M)\to SO(3)\times SO(2,1).\]
Note that $\pi_{1}(M)$ has $2\mathbf{g}$ generators and has 1 relation. Fundamental group $\pi_{1}(M)$ is expressed by
\[\pi_{1}(M)=<a_{1}. b_{1}, ... a_{\mathbf{g}}, b_{\mathbf{g}}| a_{1}b_{1}a_{1}^{-1}b_{1}^{-1}...a_{\mathbf{g}}b_{\mathbf{g}}a_{\mathbf{g}}^{-1}b_{\mathbf{g}}^{-1}=1>.\]
Thus, the dimension of this representation is 
\[6(2\mathbf{g}-2)=12\mathbf{g}-12.\]

We can also count the dimension of the moduli space of scalar-flat K\"ahler metrics from $-3\chi-8\tau$.
When the vector bundle is stable, there is no holomorphic vector field, and thus we can deform the scalar-flat K\"ahler metric on it. 
In this case, $\tau=0$, and therefore, the dimension of the moduli space is $-3\chi$. Since $\chi=-4(\mathbf{g}-1)$, we have
\[-3\chi=3\times 4(\mathbf{g}-1)=12\mathbf{g}-12.\]

The dimension of the moduli space of ASD metrics is given by 
\[-\frac{1}{2}(15\chi+29\tau).\]
Again, since $\tau=0$, it's equal to 
\[-\frac{1}{2}15\chi=\frac{1}{2}15\times 4(\mathbf{g}-1)=30(\mathbf{g}-1).\]

This also can be calculated by considering the corresponding representation. 
ASD metrics on $P(\mathbb{V}),$ over $\Sigma_{\mathbf{g}}$ is conformally flat since $\tau=0$. 
The conformal group acts on the universal covering space, $S^{4}-S^{1}$.
Then the dimension of ASD metric is the same with the dimension of the following representation space up to conjugation. 
\[\rho:\pi_{1}(M)\to SO(5,1).\]
Since $dim SO(5,1)=15$, the dimension of the moduli of ASD metrics which comes from this representation is given by $15(2\mathbf{g}-2)$, which is the same as before. 

\vspace{20pt}

\section{\textbf{Seiberg-Witten invariants}}\label{S:P}

In this section, we show the Seiberg-Witten invariant can give us useful topological information for manifolds which admit a strictly almost-K\"ahler ASD metric and also a metric of positive scalar curvature.

We begin by explaining the Seiberg-Witten invariant briefly.
Suppose a smooth, compact riemannian 4-manifold $(M, g)$ admits an almost-complex structure.
Its homotopy class $c$ contains an almost-complex structure $J$ which is compatible with g [14].
Then the complexified tangent bundle $TM\otimes\mathbb{C}$ decomposes 
as $T^{1,0}\oplus T^{0,1}$. 
We define positive and negative spinor bundles by 
\[\mathbb{V}_{+}=\Lambda^{0,0}\oplus\Lambda^{0,2}\]
\[\mathbb{V}_{-}=\Lambda^{0,1}.\]
These spinor bundles inherits a hermitian inner product from $g$ on $M$. 
Also this bundles have Clifford action of $\Lambda^{p,q}$ given by 
\[v\cdot(w^{1}\wedge\cdot\cdot\cdot\wedge w^{k})=\sqrt{2}v^{0,1}\wedge w^{1}\wedge\cdot\cdot\cdot\wedge w^{k}-\sqrt{2}\sum_{i=1}^{k}\langle w^{i}, \overline{v^{1,0}}\rangle w^{1}\wedge\cdot\cdot\hat{w^{i}}\cdot\cdot\wedge w^{k},\]
where $\langle , \rangle$ is the hermitian inner product which is complex linear on the first variable and anti-linear on the second variable.

These bundles depend only on the homotopy class of J, which we denote by $c=[J]$. This means on a contractible open set $U\in M$, $\mathbb{V}_{\pm}$ can be identified with
 $\mathbb{S}_{\pm}\otimes (K^{-1})^{\frac{1}{2}}$, where $\mathbb{S}_{\pm}$ are spin-bundles and they have spin connections induced from the Levi-Civita connection [14].
 Thus, a connection $A$  on $K^{-1}$ which is compatible with $g$-induced inner product and Spin connection on $\mathbb{S}_{\pm}$
determines a connection on $\mathbb{V}_{\pm}$.
Then using this connection on $\mathbb{V}_{\pm}$, we define the Dirac operator
\[D_{A}:C^{\infty}(\mathbb{V}_{+})\to C^{\infty}(\mathbb{V}_{-}),\]
where $D_{A}$ is given by
\[
\begin{CD}
D_{A}:C^{\infty}(\mathbb{V}_{+})@>\nabla{A}>>\quad C^{\infty}(T^{*}M\otimes\mathbb{V}_{+})@>cl>> C^{\infty}(\mathbb{V}_{-}).\\
\end{CD}
\]
Using an orthonormal basis, it is given by $D_{A}(\Phi)=\Sigma e_{i}\cdot(\nabla_{A}\Phi)(e_{i})$.

The perturbed Seiberg-Witten equation is defined by
\[
\begin{cases}
F_{A}^{+}=\sigma(\Phi)+i\epsilon\\
D_{A}(\Phi)=0.
\end{cases}
\]
Here  $\Phi$ is a section of $C^{\infty}(\mathbb{V}_{+})$ and $F_{A}^{+}$ is the self-dual part of the curvature form of the connection $A$ on $K^{-1}$ and $\epsilon$ is a perturbation self-dual 2-form. 
Since the Lie algebra of $U(1)$ is $i\mathbb{R}$, $F^{+}_{A}\in\Lambda^{2}\otimes i\mathbb{R}$ is a purely imaginary self-dual 2-form.

The Seiberg-Witten equation is invariant under the action of the gauge group, $Map(M, S^{1})$. 
In order to get a well-defined invariant, we need to consider the gauge group action.
We call $(A, \Phi)$ a reducible solution if $\Phi\equiv 0$. 
Otherwise, we call $(A, \Phi)$ an irreducible solution. 
The gauge group does not act freely on reducible solutions, and therefore, we only consider irreducible solutions. 

Let us fix a unitary connection $A_{0}$ on $K^{-1}$. 
Then for any given unitary connection $A$ on $K^{-1}$, there is a gauge transformation so that after the gauge transformation on $A$, 
we have $A=A_{0}+\theta$ and $d^{*}\theta=0$. 
Thus, the moduli space 
\begin{equation}
\mathcal{M}^{*}_{g}=\{(A, \Phi)\in L^{p}_{k}(\Lambda^{1})\times L^{p}_{k}(\mathbb{V}^{+})| D_{A}\Phi=0, F^{+}_{A}=\sigma(\Phi)+i\epsilon, \Phi\neq 0\}/Map(M, S^{1})
\end{equation}
can be rewritten as follows,
\begin{equation}
\mathcal{M}^{*}_{g}=\{(A, \Phi)|D_{A}\Phi=0, F^{+}_{A}=\sigma(\Phi)+i\epsilon, d^{*}(A-A_{0})=0, \Phi\neq 0\}/U^{1}\rtimes H^{1}(M, \mathbb{Z}),
\end{equation}
where $U^{1}\rtimes H^{1}(M, \mathbb{Z})$ is a 1-dimensional group.
In order to define a well-defined map, we choose the space $L^{p}_{k}$, where $p>4$ and $k\geq 1$. 
Here $L^{p}_{k}$ denote the Sobolev space
\[L^{p}_{k}(\Omega)=\{u\in L^{p}(\Omega)|D^{\alpha}u\in L^{p}(\Omega), \forall |\alpha|\leq k\}.\]
Then for generic $\epsilon$, $\mathcal{M}^{*}_{g}$ is a smooth manifold of dimension 0.
We refer to [20] for proofs in detail.

Using the gauge-fixing Lemma, and the bound of a spinor field $\Phi$ which comes from the Seiberg-Witten equation intrinsically, 
we get compactness of $\mathcal{M}^{*}$
and we recover regularity from the elliptic bootstrapping. 
Since the dimension of the moduli space is zero and the moduli space is compact, it consists of finite points.

If $(A,\Phi)$ is reducible, then $F_{A}^{+}=i\epsilon$. Denote by $c_{1}^{+}$ the image of $c_{1}\in H^{2}(M, \mathbb{R})$ in $H^{+}(g)$ of 
deRham classes of self-dual harmonic 2-forms which depends on the metric $g$. 
If the orthogonal projection of $\epsilon$ onto the self dual harmonic 2-forms is not equal to $-2\pi c_{1}^{+}$, then there is no such a solution. However, this is a closed condition, and therefore,
for a generic 2-form, there is no reducible solution. 
\newtheorem{Definition}{Definition}
\begin{Definition}

[14] Let $g$ be a riemannian metric on a compact, smooth 4-dimensional manifold $M$ which admits an almost-complex structure $J$ and let  $\epsilon\in L^{p}_{k-1}(\Lambda^{+})$.
If $[\epsilon_{H}]\neq -2\pi c_{1}^{+}$, then $(g,\epsilon)$ is called a good pair. Here, $\epsilon_{H}$ is a harmonic part of $\epsilon$ and $c_{1}^{+}$ means its projection 
onto self-dual part. 
\end{Definition}
The path component of good pairs is called a chamber. 
If $b_{+}>1$, there is only one chamber and if $b_{+}=1$, there are exactly two chambers [14]. 

\begin{Definition}

[14] We call $(g,\epsilon)$ an excellent pair if $(g,\epsilon)$ is a good pair and if $i\epsilon$ is a regular value for the map $\wp$, where $\wp: \mathcal{M}^{*}_{g}\to F_{A}^{+}-\sigma(\Phi)$. 
\end{Definition}

\begin{Definition}
Let  $(M, c)$ be a smooth, compact 4-manifold with a homotopy class $c=[J]$ of almost-complex structures. 
Then we define by $n_{c}(M, g, \epsilon)$ for an excellent pair $(g, \epsilon)$ the number of solutions of (7) mod 2. 
\end{Definition}

As proven in [14], if two excellent pairs $(g', \epsilon')$ and $(g, \epsilon)$ are in the same chamber, then 
\[n_{c}(M, g',\epsilon')=n_{c}(M, g, \epsilon).\]
 
Thus, when $b_{+}>1$,
we define
\[n_{c}(M)=n_{c}(M, g, \epsilon)\]
for any excellent pair $(g, \epsilon)$. 

When $b_{+}=1$, the Seiberg-Witten invariant depends on the chamber. 
However, if $c_{1}\neq 0$, and $c_{1}^{2}\geq 0$, then $(g,0)$ is a good pair for any metric $g$ and therefore, all the pairs $(g,0)$ belongs to the same path component [14].

We show that there is a non-trivial solution of the Seiberg-Wittten equation for the pair $(g,\epsilon)$, where
$g$ is an almost-K\"ahler anti-self-dual metric and $\epsilon$ is an explicit perturbation form.
Using an almost-K\"ahler metric, Taubes proved there is only one solution for a large perturbation form [27].
And this unique solution consists of special connection $A_{0}$ on the anti canonical bundle $K^{-1}$,
which was discovered independently by Blair and Taubes, and a simple positive spinor $\Phi_{0}=(r,0)\in \mathbb{V}^{+}$.
LeBrun found the curvature form of this connection [16]. Below we only need the self-dual part of $iF_{A_{0}}$.
\begin{Proposition}

[16] Let $M$ be an almost-K\"ahler 4-manifold.  Then the self-dual part of the curvature form of the Blair-Taubes connection $A_{0}$
is given by 
\[iF^{+}_{A_{0}}=\eta+\frac{s+s^{*}}{8}\omega,\]
where $s$ is the scalar curvature and $s^{*}$ is the star-scalar curvature and $\eta=W^{+}(\omega)^{\perp}\in \Lambda^{2,0}\oplus\Lambda^{0,2}$ 
is orthogonal to $\omega$.
\end{Proposition}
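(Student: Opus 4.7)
The approach I would take is to apply the Lichnerowicz--Weitzenb\"ock formula for the Spin$^c$ Dirac operator to the canonical spinor of the almost-K\"ahler structure. The Blair--Taubes connection $A_{0}$ is characterized by the fact that, under the identifications $\mathbb{V}_{+}=\Lambda^{0,0}\oplus\Lambda^{0,2}$ and $\mathbb{V}_{-}=\Lambda^{0,1}$, the induced Dirac operator takes the form $D_{A_{0}}=\sqrt{2}(\bar\partial+\bar\partial^{*})$. The canonical spinor $\Phi_{0}:=1\in\Lambda^{0,0}\subset\mathbb{V}_{+}$ therefore satisfies $D_{A_{0}}\Phi_{0}=0$ and has $|\Phi_{0}|^{2}\equiv 1$, so Lichnerowicz--Weitzenb\"ock applied to $\Phi_0$ gives
\[0 = \nabla^{*}\nabla\Phi_{0} + \tfrac{s}{4}\Phi_{0} + \tfrac{1}{2}F^{+}_{A_{0}}\cdot\Phi_{0}.\]

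Using Lemma 3, I would decompose $iF^{+}_{A_{0}} = f\omega + \eta$ with $f\in C^{\infty}(M,\mathbb{R})$ and $\eta\in(\Lambda^{2,0}\oplus\Lambda^{0,2})_{\mathbb{R}}$. Under the Clifford action specified earlier, $\omega$ acts on $\Lambda^{0,0}\subset\mathbb{V}_{+}$ as the scalar $-2i$, while $\eta$ shifts $\Lambda^{0,0}$ into $\Lambda^{0,2}$. Pairing the Weitzenb\"ock identity with $\Phi_{0}$ and extracting the $\Lambda^{0,0}$-component yields a scalar equation of the shape $f = |\nabla\Phi_{0}|^{2} + s/4$, where I used that $|\Phi_{0}|^{2}\equiv 1$ forces $\langle\nabla^{*}\nabla\Phi_{0},\Phi_{0}\rangle = |\nabla\Phi_{0}|^{2}$. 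The main technical obstacle is computing $|\nabla\Phi_{0}|^{2}$ as an explicit multiple of $|\nabla\omega|^{2}$: since $A_{0}$ differs from the unitary connection on $K^{-1}$ induced by the Levi-Civita connection by a term measured by $\nabla J$, a computation in a unitary frame should yield $|\nabla\Phi_{0}|^{2}=\tfrac{1}{8}|\nabla\omega|^{2}$. Combining this with Gauduchon's almost-K\"ahler identity $s^{*}-s = \tfrac{1}{2}|\nabla\omega|^{2}$ then converts the scalar equation into $f=\tfrac{s+s^{*}}{8}$.

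To identify the transverse piece $\eta$, I would project the Weitzenb\"ock identity onto the $\Lambda^{0,2}$-summand of $\mathbb{V}_{+}$ and cross-reference with the Weitzenb\"ock identity (2) applied to $\omega$ itself. Since $W_{+}:\Lambda^{+}\to\Lambda^{+}$ is symmetric, the component of $W_{+}(\omega)$ along $\omega$ is encoded by the scalar $W_{+}(\omega,\omega)$, while the orthogonal complement $W_{+}(\omega)^{\perp}$ lies in $(\Lambda^{2,0}\oplus\Lambda^{0,2})_{\mathbb{R}}$ by Lemma 3. Matching the $\Lambda^{0,2}$-components of the Dirac Weitzenb\"ock for $\Phi_{0}$ against the Hodge Weitzenb\"ock for $\omega$ should force $\eta = W_{+}(\omega)^{\perp}$. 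As a consistency check, on an integrable K\"ahler surface one has $\nabla\omega=0$, hence $s=s^{*}$, and $W_{+}$ is diagonal in the Lemma 3 splitting, giving $\eta=0$; the formula then recovers the familiar Chern-curvature identity $iF^{+}_{A_{0}}=\tfrac{s}{4}\omega$.
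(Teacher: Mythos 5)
The paper does not prove this Proposition; it is stated with a citation to LeBrun [16] and no argument is given, so there is no internal proof to compare yours against. Your strategy --- apply the Weitzenb\"ock formula for $D_{A_0}$ to the canonical spinor, decompose $iF^{+}_{A_0}=f\omega+\eta$ according to Lemma 3, read off $f$ from the $\Lambda^{0,0}$-component of the identity, and identify $\eta$ by comparison with the Weitzenb\"ock formula (2) for $\omega$ --- is the natural route to such a curvature formula. The skeleton $f=|\nabla_{A_0}\Phi_0|^{2}+\tfrac{s}{4}$ (using $\omega\cdot\Phi_0=-2i\Phi_0$ and the fact that the $\Lambda^{2,0}\oplus\Lambda^{0,2}$ part of $F^{+}_{A_0}$ pairs trivially with $\Phi_0$) is correct, as is your K\"ahler consistency check.

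Three points need repair. First, your characterization of $A_0$ by $D_{A_0}=\sqrt2(\bar\partial+\bar\partial^{*})$ is false for non-integrable $J$: the Dirac operator then differs from $\sqrt2(\bar\partial+\bar\partial^{*})$ by Nijenhuis terms. The correct characterization --- the one the paper itself records in Section 4 --- is $\nabla_{A_0}u_0|_{\Lambda^{0,0}}=0$ together with Taubes's observation that $D_{A_0}u_0=0$ exactly when $d\omega=0$; fortunately $D_{A_0}\Phi_0=0$ is all your argument actually uses, so this is a wrong justification of a true fact. Second, your constants do not close up. In the paper's normalization $|\omega|=\sqrt2$, combining (2) with $s^{*}=2R(\omega,\omega)=2W^{+}(\omega,\omega)+\tfrac{s}{3}$ forces $s^{*}-s=|\nabla\omega|^{2}$, not $\tfrac12|\nabla\omega|^{2}$; to land on $f=\tfrac{s+s^{*}}{8}$ you need precisely $|\nabla_{A_0}\Phi_0|^{2}=\tfrac{s^{*}-s}{8}$, whereas the two identities you quote combine to give $f=\tfrac{s^{*}}{4}$, which agrees with the Proposition only in the K\"ahler case. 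At least one of your two constants is taken from an incompatible norm convention and must be recomputed in the convention fixed here. Third, the identification $\eta=W^{+}(\omega)^{\perp}$ is the thinnest step: $\nabla_{A_0}$ on the $\Lambda^{0,2}$ summand of $\mathbb{V}_{+}$ is not the Levi-Civita connection on $\Lambda^{0,2}\subset\Lambda^{+}_{\mathbb{C}}$ (they differ by a tensor built from $\nabla J$), so matching the $\Lambda^{0,2}$-component of the Dirac Weitzenb\"ock identity against $W^{+}(\omega)^{\perp}=\tfrac12(\nabla^{*}\nabla\omega)^{\perp}$ requires showing that this difference tensor drops out of the comparison; as written this is an assertion rather than an argument.
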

When $W^{+}=0$, we have 
\[iF_{A_{0}}^{+}=\frac{s+s^{*}}{8}\omega.\]
By definition, 
\[s^{*}=2R(\omega, \omega).\]
Since $\omega \in \Lambda^{+}$, from the decomposition of the curvature operator, we get

\[R(\omega, \omega)=W^{+}(\omega, \omega)+\frac{s}{12}|\omega|^{2}.\]
Thus, for an ASD metric, we get
\[s^{*}=2R(\omega,\omega)=\frac{s}{3}.\]
In sum, for an  almost-K\"ahler ASD metric, we have
\begin{equation}
iF^{+}_{A_{0}}=\frac{s+s^{*}}{8}\omega=\frac{s+\frac{s}{3}}{8}\omega=\frac{s}{6}\omega.
\end{equation}
In the K\"ahler case, we saw that the following identity (4) holds
\[\int_{M}sd\mu=4\pi c_{1}\cdot[\omega].\]
In the symplectic case, there is a corresponding formula which is discovered by Blair.
In this paper, we can prove this identity using the curvature formula of the Blair-Taubes connection.
\begin{Lemma}

[16] Let $(M, g,\omega)$ be a compact, 4-dimensional almost-K\"ahler manifold. Then, 
\[\int_{M}\frac{s+s^{*}}{2}d\mu=4\pi c_{1}\cdot[\omega].\]
\end{Lemma}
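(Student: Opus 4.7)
The plan is to read off the identity directly from Chern--Weil theory combined with the explicit curvature formula for the Blair--Taubes connection from Proposition 5. Since $A_0$ is a unitary connection on $K^{-1}$, the Chern--Weil representative of $c_1 = c_1(K^{-1})$ is $\tfrac{i}{2\pi} F_{A_0}$, so
\[
2\pi\, c_1 \cdot [\omega] = \int_M i F_{A_0} \wedge \omega.
\]
The plan is to evaluate the right-hand side using the self-dual/anti-self-dual decomposition.

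First I would use the standard identity $\alpha \wedge \beta = \langle \alpha, *\beta\rangle\, d\mu$ for 2-forms on a 4-manifold, together with the fact (Lemma 1) that $\omega$ is self-dual, to rewrite
\[
\int_M i F_{A_0}\wedge \omega \;=\; \int_M \langle i F_{A_0},\omega\rangle\, d\mu \;=\; \int_M \langle i F^+_{A_0},\omega\rangle\, d\mu,
\]
where the second equality uses that self-dual and anti-self-dual forms are pointwise orthogonal, so only the self-dual part of the curvature contributes.

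Next I would plug in the explicit formula of Proposition 5:
\[
i F^+_{A_0} \;=\; \eta + \frac{s+s^*}{8}\,\omega,\qquad \eta \in \Lambda^{2,0}\oplus\Lambda^{0,2},\ \eta\perp\omega.
\]
Taking the pointwise inner product with $\omega$ kills the $\eta$-contribution by orthogonality, leaving $\frac{s+s^*}{8}|\omega|^2$. Since Lemma 1 also says $|\omega|^2 = 2$, this gives $\langle i F^+_{A_0},\omega\rangle = \frac{s+s^*}{4}$, hence
\[
2\pi\, c_1 \cdot [\omega] \;=\; \int_M \frac{s+s^*}{4}\, d\mu,
\]
and multiplying by $2$ yields the claimed identity.

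There is really no serious obstacle here: all the geometric content is packaged in Proposition 5, and the argument is just bookkeeping about the self-dual decomposition together with the normalization $|\omega|^2 = 2$. The only point worth flagging is the sign/orientation convention in the Chern--Weil formula $c_1(K^{-1}) = [\tfrac{i}{2\pi}F_{A_0}]$, which must be consistent with the convention under which Proposition 5 was stated; once that is fixed, the identity drops out immediately.
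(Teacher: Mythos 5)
Your proposal is correct and follows essentially the same route as the paper: both invoke the Chern--Weil identification $iF_{A_0}=2\pi c_1(K^{-1})$ (in de Rham cohomology), wedge with the self-dual form $\omega$, kill the $\eta$-term by orthogonality, and use $|\omega|^2=2$ from Lemma 1. Your version merely spells out the bookkeeping (the identity $\alpha\wedge\omega=\langle\alpha,\omega\rangle\,d\mu$ and the vanishing of the anti-self-dual contribution) that the paper leaves implicit.
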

\begin{proof}
By Proposition 5, we have 
\[iF^{+}_{A_{0}}=\eta+\frac{s+s^{*}}{8}\omega\]
Since
\[iF^{+}_{A_{0}}=2\pi c_{1}^{+}(K^{-1}),\]
 we get
\[4\pi c_{1}\cdot\omega=2iF_{A_{0}}\wedge \omega=\int_{M}\frac{s+s^{*}}{4}\omega \wedge \omega=\int_{M}\frac{s+s^{*}}{2}d\mu.\]
\end{proof}
If the metric is almost-K\"ahler anti-self-dual, then $s^{*}$ is equal to $\frac{s}{3}$. Therefore, we get 
\begin{equation}
4\pi c_{1}\cdot[\omega]=\int_{M}\frac{s+s^{*}}{2}d\mu=\int_{M}\frac{s+\frac{s}{3}}{2}d\mu=\int_{M}\frac{2}{3}sd\mu.
\end{equation}
On the other hand, by the Weitzenb\"ock formula (3) for a self dual 2-form, we have 
\[0=|\nabla\omega|^{2}+\frac{2}{3}s.\]
Thus, the scalar curvature $s$ is non-positive and moreover, $s=0$ if and only if $\nabla\omega=0$.
Therefore, for a strictly almost-K\"ahler anti-self-dual metric, we have
\[4\pi c_{1}\cdot[\omega]=\int_{M}\frac{2}{3}sd\mu<0.\]
\begin{Lemma}
Let $g$ be an almost-K\"ahler ASD metric on a compact 4-manifold. 
If $g$ is a strictly almost K\"ahler ASD metric, then $c_{1}\cdot\omega<0$.
\end{Lemma}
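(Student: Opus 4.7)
The plan is to assemble the three ingredients already developed in the paragraphs preceding the lemma; essentially no new idea is required, only a careful chain of equalities and one appeal to real-analyticity.

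First, I would record the simplification of the Blair--Taubes curvature formula under the ASD hypothesis. Proposition 5 gives $iF^{+}_{A_{0}} = \eta + \frac{s+s^{*}}{8}\omega$ with $\eta = W^{+}(\omega)^{\perp}$. Setting $W_{+}=0$ kills $\eta$, and $s^{*} = 2R(\omega,\omega) = 2W^{+}(\omega,\omega) + \frac{s}{6}|\omega|^{2} = \frac{s}{3}$ (since $|\omega|^{2}=2$), so that $iF^{+}_{A_{0}} = \frac{s}{6}\omega$, as already noted in (9).

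Second, I would pair with $[\omega]$ and integrate, using the Chern--Weil identity $[iF_{A_{0}}/2\pi]=c_{1}(K^{-1})=c_{1}(M)$. Wedging the identity of Lemma on the Blair formula with $\omega$ and integrating gives exactly equation (10):
\[
4\pi\, c_{1}\cdot[\omega] \;=\; \int_{M}\tfrac{2}{3}s\,d\mu.
\]

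Third, I would combine this with the Weitzenb\"ock relation (3), $|\nabla\omega|^{2}+\tfrac{2}{3}s=0$, valid pointwise. This forces $s\le 0$ everywhere. Now the strictly almost-K\"ahler hypothesis says $g$ is not K\"ahler, so by Lemma 2, $s$ is not identically zero; equivalently, $\nabla\omega\not\equiv 0$. Since an ASD metric is real-analytic in suitable coordinates, $\nabla\omega$ is real-analytic, and its zero set is a proper real-analytic subvariety. Hence $s<0$ on an open dense set, so $\int_{M}s\,d\mu<0$, and therefore $c_{1}\cdot[\omega]<0$.

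The only conceptually delicate step is the real-analyticity argument used to promote \textquotedblleft $s$ not identically zero\textquotedblright{} to \textquotedblleft $\int_{M} s\, d\mu<0$,\textquotedblright{} but this is precisely the observation the paper already made immediately after Lemma 2. Everything else is direct substitution.
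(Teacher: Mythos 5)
Your proposal is correct and follows exactly the paper's own route: specialize the Blair--Taubes curvature formula to the ASD case to get $4\pi c_{1}\cdot[\omega]=\int_{M}\tfrac{2}{3}s\,d\mu$, then use the Weitzenb\"ock identity $|\nabla\omega|^{2}+\tfrac{2}{3}s=0$ to see $s\le 0$ with $s\not\equiv 0$ in the strictly almost-K\"ahler case. The only (harmless) difference is that you invoke real-analyticity to get $s<0$ on a dense set, whereas the paper only needs that a continuous nonpositive function which is not identically zero has strictly negative integral.
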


\begin{Corollary}
Let $M$ be a smooth, compact, 4-manifold with an almost-K\"ahler anti-self-dual metric $g$. 
Suppose $c_{1}=0$. Then $g$ is hyperK\"ahler. 
\end{Corollary}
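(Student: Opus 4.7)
The plan is to apply the preceding lemma as a dichotomy and then analyze the remaining case with classical Kähler geometry.

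First I would invoke the lemma just proved: if $g$ were a strictly almost-Kähler ASD metric, then $c_1\cdot[\omega]<0$. Since by hypothesis $c_1=0$ we have $c_1\cdot[\omega]=0$, so $g$ cannot be strictly almost-Kähler ASD. By the definition of \textquotedblleft strict\textquotedblright\ in this paper, $g$ must then be scalar-flat Kähler. In particular $J$ is integrable and $\omega$ is parallel.

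Next I would upgrade \emph{scalar-flat} to \emph{Ricci-flat} using $c_1=0$. Decompose the Ricci form as $\rho = \tfrac{s}{4}\omega + \rho_0$ with $\rho_0\in\Lambda^{1,1}_0$, as in the discussion following Proposition 2. Since $s\equiv 0$ we have $\rho=\rho_0$, and by Lemma 3 the bundle $\Lambda^{1,1}_0$ coincides with $\Lambda^{-}$, so $\rho$ is anti-self-dual. The Ricci form is always closed, and on an oriented Riemannian 4-manifold a closed anti-self-dual form satisfies $d^{*}\rho = -{*}d{*}\rho = {*}d\rho = 0$, hence is harmonic. On the other hand $[\rho]=2\pi c_1 = 0$, so $\rho$ is exact. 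By the Hodge decomposition a form which is simultaneously harmonic and exact must vanish, so $\rho\equiv 0$ and the metric is Ricci-flat Kähler.

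Finally I would identify Ricci-flat Kähler with hyperKähler in this dimension. On a Kähler surface the restricted holonomy lies in $U(2)$, and the Ricci form is the curvature of the induced connection on the anticanonical bundle, so $\rho=0$ forces the holonomy to reduce to $SU(2)$. The accidental isomorphism $SU(2)=Sp(1)$ identifies such holonomy reductions with hyperKähler structures, giving the conclusion.

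The argument is essentially a chain of standard facts, and I do not anticipate a genuine obstacle: the one point requiring care is the step that a closed ASD 2-form is harmonic (and therefore vanishes once it is also exact), which uses only the definition of $d^{*}$ together with anti-self-duality. Everything else is a direct application of the preceding lemma, of Proposition 2 (on $\int_M s\,d\mu=4\pi c_1\cdot[\omega]$), and of the Berger holonomy identification $SU(2)=Sp(1)$ specific to real dimension four.
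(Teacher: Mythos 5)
Your argument is correct, and it reaches Ricci-flatness by a genuinely different route from the paper. The first step is essentially the same in both: the identity $4\pi c_{1}\cdot[\omega]=\int_{M}\tfrac{2}{3}s\,d\mu$ together with $s\le 0$ forces $s\equiv 0$, hence $\nabla\omega=0$ and $g$ is (scalar-flat) K\"ahler; your invocation of Lemma 7 is just the contrapositive packaging of that computation. Where you diverge is the passage from scalar-flat to Ricci-flat. The paper uses the characteristic-number formula
\[
c_{1}^{2}=2\chi+3\tau=\frac{1}{4\pi^{2}}\int_{M}\left(\frac{|s|^{2}}{24}+2|W_{+}|^{2}-|\mathring{r}|^{2}\right)d\mu ,
\]
in which $c_{1}^{2}=0$, $s=0$ and $W_{+}=0$ leave only $-\int_{M}|\mathring{r}|^{2}\,d\mu=0$, so $\mathring{r}=0$. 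You instead note that with $s=0$ the Ricci form $\rho=\rho_{0}$ lies in $\Lambda^{1,1}_{0}=\Lambda^{-}$ (Lemma 3), is closed and anti-self-dual hence co-closed and harmonic, and is exact because $[\rho]=2\pi c_{1}=0$; Hodge theory then gives $\rho\equiv 0$. Both are valid; the paper's route leans on the Gauss--Bonnet/signature integrand and uses anti-self-duality a second time, whereas your Hodge-theoretic argument is more elementary and actually proves the stronger statement that \emph{any} scalar-flat K\"ahler surface with vanishing real first Chern class is Ricci-flat. The final identification of Ricci-flat K\"ahler with hyperK\"ahler via $SU(2)=Sp(1)$ is left implicit in the paper and made explicit by your holonomy remark; note only that this argument controls the restricted holonomy, so without an additional hypothesis (simple connectivity, or $c_{1}=0$ integrally) the conclusion is hyperK\"ahler up to a finite cover --- a caveat your proof shares with the paper's.
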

\begin{proof}
For an almost-K\"ahler anti-self-dual metric, we have $s\leq 0$.
From (10), if $c_{1}=0$, then $s=0$.
This implies that $g$ is K\"ahler. 
On the other hand,  we have 
\[c_{1}^{2}=2\chi+3\tau=\frac{1}{4\pi^{2}}\int_{M}\left(\frac{|s|^{2}}{24}+2|W^{+}|^{2}-|ric_{0}|^{2}\right)d\mu.\]
Since $g$ is scalar-flat anti-self-dual, we get $ric_{0}=0$.
Thus, $g$ is Ricci-flat K\"ahler. 
\end{proof}

Using an almost-K\"ahler metric, Taubes showed that the constant section $u_{0}$ of $\Lambda^{0,0}$ with unit length satisfies the Dirac equation.
The connection $A_{0}$ on $K^{-1}$ induces a covariant derivative $\nabla_{A_{0}}$ on $\mathbb{V}_{+}$
and $\nabla_{A_{0}}u_{0}\in \mathbb{V}_{+}\otimes T^{*}_{\mathbb{C}}$.
As it is shown by Taubes [27], $A_{0}$ can be chosen so that the following holds
\[\nabla_{A_{0}}u_{0}|_{\Lambda^{0,0}}=0.\]

Then by [27] we note that
$d\omega=0$ if and only if $D_{A_{0}}u_{0}=0$, where $D_{A_{0}}$ is the Dirac operator on $\mathbb{V}_{+}$.

Given $\Phi\in\mathbb{V}_{+}$, we define $\sigma(\Phi)\in End(\mathbb{V_{+}})$ [22] by 
\[\sigma(\Phi): \rho\to \langle\rho, \Phi\rangle\Phi-\frac{1}{2}|\Phi|^{2}\rho.\]

On the other hand, $\Lambda_{+}^{2}\otimes\mathbb{C}$ induces an endomorphism of $\mathbb{V}_{+}$ by Clifford multiplication. 
Let us write $\Phi=(\alpha, \beta)$, where $\alpha\in\Lambda^{0,0}$ and $\beta\in \Lambda^{0,2}$. 
Then we claim the following self-dual 2-form induces $\sigma(\Phi)$,
\[\frac{i}{4}(|\alpha|^{2}-|\beta|^{2})\omega+\frac{1}{2}(\bar{\alpha}\beta-\alpha\bar{\beta}).\]
This can be checked directly. Here we use that $e_{i}\wedge e_{j}$, $i<j$ as an orthonormal basis and $|dz_{i}|=|\overline{dz_{i}}|=\sqrt{2}$. 
Also note that the above 2-form is a purely imaginary self-dual 2-form.
Thus, the Seiberg-Witten equation can be written as
\[
\begin{cases}
F^{+}_{A}=\frac{i}{4}(|\alpha|^{2}-|\beta|^{2})\omega+\frac{1}{2}(\bar{\alpha}\beta-\alpha\bar{\beta})+i\epsilon\\
D_{A}\Phi=0.
\end{cases}
\]
Recall we have the following self-dual part of the curvature formula (9) for $A_{0}$
\[iF^{+}_{A_{0}}=\frac{s+s^{*}}{8}\omega=\frac{s+\frac{s}{3}}{8}\omega=\frac{s}{6}\omega.\]
For the spinor solution $\Phi_{0}=(r,0)$, the corresponding self-dual 2-form is given by
\[\sigma(\Phi_{0})=\frac{ir^{2}\omega}{4}.\]
Therefore, the $\epsilon$ corresponding to $(A_{0}, \Phi_{0})$ on the Seiberg-Witten equation becomes 
\[\epsilon=-(\frac{s}{6}+\frac{r^{2}}{4})\omega.\]
In the following, we show that the Blair-Taubes connection $A_{0}$ and the positive spinor $\Phi_{0}=(r,0)$ is a unique solution 
with respect to the almost-K\"ahler ASD metric $g$ and  $\epsilon$.

\begin{Theorem}
Let $M$ be a smooth, compact 4-manifold with an almost-K\"ahler ASD metric $g$.
Then there is a unique non-trivial solution of the Seiberg-Witten equation for the pair $(g, \epsilon)$, where 
\[\epsilon=-(\frac{s}{6}+\frac{r^{2}}{4})\omega\]
and $r\geq\sqrt{\frac{4|s|}{3}}.$
\end{Theorem}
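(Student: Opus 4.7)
The existence half of the statement is already delivered by the construction preceding the theorem: a direct calculation verifies that $(A_{0}, \Phi_{0})$ with $\Phi_{0} = (r,0)$ satisfies both Seiberg-Witten equations for the stated $\epsilon$, with no upper or lower constraint on $r$. The content to prove is therefore uniqueness, and the role of the hypothesis $r \ge \sqrt{4|s|/3}$ is to rule out any rival non-trivial solution. My plan is to run a Weitzenb\"ock-plus-maximum-principle argument in the spirit of Taubes, simplified substantially by the anti-self-dual hypothesis which makes every curvature term explicit via Proposition 5 and the identity $s^{*} = s/3$.

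\textbf{Forcing $\beta \equiv 0$.} Let $(A, \Phi=(\alpha,\beta))$ be any solution, with $\alpha \in \Lambda^{0,0}$ and $\beta \in \Lambda^{0,2}$. I would apply the Lichnerowicz--Weitzenb\"ock formula
\[
0 = D_{A}^{*}D_{A}\Phi = \nabla_{A}^{*}\nabla_{A}\Phi + \frac{s}{4}\Phi + \frac{1}{2}F_{A}^{+}\cdot\Phi,
\]
then substitute the first Seiberg-Witten equation with $\epsilon = -(s/6 + r^{2}/4)\omega$. The $\sigma(\Phi)$ piece contributes the standard positive quartic $\tfrac{1}{2}|\Phi|^{4}$ when paired with $\Phi$, while the $\epsilon$ piece contributes a term proportional to $|\alpha|^{2} - |\beta|^{2}$ because Clifford multiplication by $\omega$ acts with eigenvalue $-2i$ on $\Lambda^{0,0}$ and $+2i$ on $\Lambda^{0,2}$. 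Decomposing the Weitzenb\"ock identity into its $\Lambda^{0,0}$ and $\Lambda^{0,2}$ components and pairing the second with $\beta$, I expect to reach a pointwise differential inequality of the shape
\[
\tfrac{1}{2}\Delta|\beta|^{2} + |\nabla_{A}\beta|^{2} + \tfrac{1}{4}|\beta|^{2}\!\left(|\alpha|^{2} + |\beta|^{2} + r^{2} + \tfrac{4s}{3}\right) \le 0.
\]
The coefficient $r^{2} + 4s/3 = r^{2} - 4|s|/3$ is non-negative by hypothesis, since $s \le 0$ by Lemma 2. Integration against $d\mu$ and the maximum principle then force $\beta \equiv 0$.

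\textbf{Recovering $(A_{0}, \Phi_{0})$.} With $\beta = 0$, the first Seiberg-Witten equation reduces to
\[
F_{A}^{+} - F_{A_{0}}^{+} = \tfrac{i}{4}(|\alpha|^{2} - r^{2})\,\omega,
\]
using $iF_{A_{0}}^{+} = (s/6)\omega$ from equation (9), and the Dirac equation becomes $D_{A}(\alpha,0) = 0$. Comparing to the Blair--Taubes identity $D_{A_{0}}u_{0} = 0$ (equivalent to $d\omega = 0$, as noted in the excerpt), the difference $a := A - A_{0}$ satisfies a $\bar{\partial}$-type vortex equation for $\alpha$. The standard Kodaira--Weitzenb\"ock calculation for $|\alpha|^{2}$, integrated against $\omega^{2}/2$, then yields $|\alpha|^{2} \equiv r^{2}$ and $a = 0$ in Coulomb gauge, identifying the solution with $(A_{0}, \Phi_{0})$ up to the constant $U(1)$-action.

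\textbf{Main obstacle.} The delicate step is the second one: extracting the precise pointwise inequality with the correct coefficient in front of $r^{2} + 4s/3$. The almost-K\"ahler Dirac operator equals $\sqrt{2}(\bar{\partial}_{A} + \bar{\partial}_{A}^{*})$ plus Nijenhuis-tensor cross terms, and the component Weitzenb\"ock formula inherits corresponding corrections from $\nabla\omega$. The ASD identity $|\nabla\omega|^{2} = -2s/3$ from equation (3) is precisely what allows these correction terms to be absorbed into the scalar-curvature expression, and the lower bound on $r$ is calibrated so that after this absorption the $|\beta|^{2}$ coefficient is manifestly non-negative. Once that matching is carried out with the correct signs, the remainder of the argument is routine.
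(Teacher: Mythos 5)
Your overall strategy (Weitzenb\"ock formula plus the explicit curvature $iF^{+}_{A_{0}}=\frac{s}{6}\omega$) starts the same way as the paper, but the heart of your argument --- the pointwise differential inequality
$\tfrac{1}{2}\Delta|\beta|^{2}+|\nabla_{A}\beta|^{2}+\tfrac{1}{4}|\beta|^{2}(|\alpha|^{2}+|\beta|^{2}+r^{2}+\tfrac{4s}{3})\le 0$
--- is asserted rather than derived, and the mechanism you propose for deriving it does not obviously close. When you split the Dirac--Weitzenb\"ock identity into its $\Lambda^{0,0}$ and $\Lambda^{0,2}$ components, the non-integrability produces cross terms that are schematically of the form $\nabla\omega * \nabla_{A}\alpha * \bar{\beta}$ and $\nabla(\nabla\omega)*\alpha*\bar{\beta}$; these are not of the form $(\text{function of }s)\cdot|\beta|^{2}$, so the identity $|\nabla\omega|^{2}=-\tfrac{2s}{3}$ by itself cannot absorb them --- any Cauchy--Schwarz estimate leaks $|\nabla_{A}\alpha|^{2}$ into the $\beta$-inequality, and the second-derivative term $\nabla\nabla\omega$ is not controlled by $s$ at all on an ASD almost-K\"ahler manifold. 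This is exactly the obstruction that forces Taubes to take $r$ large depending on the $C^{1}$-norm of the Nijenhuis tensor; your sketch flags it as ``the delicate step'' but does not resolve it, and there is no reason the constants would come out to the sharp threshold $r\ge\sqrt{4|s|/3}$ in the statement. So as written there is a genuine gap at the step that forces $\beta\equiv 0$.

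The paper avoids the component decomposition entirely, which is why it gets the explicit bound. It pairs the full Weitzenb\"ock identity with $\Phi$ (no splitting into $\alpha$- and $\beta$-equations), multiplies by $|\Phi|^{2}$, and controls the resulting integral using a second Weitzenb\"ock formula --- the one for the self-dual $2$-form $\phi=\sigma(\Phi)$, which on an ASD manifold has no $W_{+}$ term and yields $\int_{M}(|\Phi|^{2}|\nabla\Phi|^{2}+\tfrac{s}{12}|\Phi|^{4})\,d\mu\ge 0$ via $|\phi|^{2}=\tfrac{1}{8}|\Phi|^{4}$ and $|\nabla\phi|^{2}\le\tfrac{1}{2}|\Phi|^{2}|\nabla\Phi|^{2}$. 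After discarding positive terms, the hypothesis $r^{2}\ge\tfrac{4|s|}{3}$ enters only to make the coefficient $\tfrac{s}{3}+\tfrac{r^{2}}{4}$ of $|\Phi|^{2}|\beta|^{2}$ nonnegative. The argument is then closed not by a vortex equation but by the topological identity $\int_{M}(|\alpha|^{2}-r^{2})\,d\mu=\int_{M}|\beta|^{2}\,d\mu$, obtained by writing $iF_{A}=iF_{A_{0}}+d\gamma$ and pairing with the harmonic form $\omega$; combining this with the integral inequality gives $\int_{M}(|\alpha|^{2}-r^{2})^{2}\,d\mu\le 0$, hence $|\alpha|^{2}=r^{2}$ and $\beta=0$ simultaneously, after which $\theta\cdot\Phi_{0}=0$ forces $\theta=0$. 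If you want to repair your write-up, the cleanest fix is to replace your pointwise $\beta$-inequality and the subsequent vortex step with this pair of integral ingredients.
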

\begin{proof}
Let us consider following perturbed Seiberg-Witten equation
\[
\begin{cases}
D_{A}\Phi=0\\
F_{A}^{+}=\sigma(\Phi)+i\epsilon.
\end{cases}
\]
A section $\Phi\in \mathbb{V}_{+}=\Lambda^{0,0}\oplus\Lambda^{0,2}$ can be written as
\[\Phi=(\alpha, \beta).\]
The Weitzenb\"ock formula for $D^{*}_{A}D_{A}$ is given by
\[D^{*}_{A}D_{A}\Phi=\nabla_{A}^{*}\nabla_{A}\Phi+\frac{s}{4}\Phi+\frac{F_{A}}{2}\cdot\Phi,\]
where $\cdot$ is Clifford multiplication.
Since $\Phi\in \mathbb{V}^{+}$, we have $\frac{F_{A}}{2}\cdot\Phi=\frac{F_{A}^{+}}{2}\cdot\Phi$.
From the Seiberg Witten equation, we get
\begin{equation}
0=\langle\nabla_{A}^{*}\nabla_{A}\Phi,\Phi\rangle+\frac{s}{4}|\Phi|^{2}+\frac{|\Phi|^{4}}{4}-(\frac{s}{6}+\frac{r^{2}}{4})(|\alpha|^{2}-|\beta|^{2}).
\end{equation}
If we put $\phi=\sigma(\Phi)$, then
$\phi$ is a self-dual 2-form. Moreover,  $\phi$ and $\Phi$ are related in the following way [15],
\[|\phi|^{2}=\frac{1}{8}|\Phi|^{4},\]
\[|\nabla\phi|^{2}\leq \frac{1}{2}|\Phi|^{2}|\nabla\Phi|^{2}.\]
Using the Weitzenb\"ock formula for self-dual 2-form (2), we have 
\[\int_{M}\left(|\nabla\phi|^{2}+\frac{s}{3}|\phi|^{2}\right)d\mu\geq0\]
In terms of $\Phi$, this means
\[\int_{M}\left(|\Phi|^{2}|\nabla\Phi|^{2}+\frac{s}{12}|\Phi|^{4}\right)d\mu\geq0\]
By multiplying $|\Phi|^{2}$ in (11) and using $\langle\nabla_{A}^{*}\nabla_{A}\Phi,\Phi\rangle=|\nabla_{A}\Phi|^{2}+\frac{1}{2}\Delta|\Phi|^{2}$ and
 $\int_{M}|\Phi|^{2}\Delta|\Phi|^{2}d\mu\geq0$, 
it follows 
 \[\int_{M}|\Phi|^{2}\left(\frac{s}{6}|\Phi|^{2}+\frac{|\Phi|^{4}}{4}-(\frac{s}{6}+\frac{r^{2}}{4})(|\alpha|^{2}-|\beta|^{2})\right)d\mu\leq0.\]
By expanding terms and subtracting some positive terms, we have
\[\int_{M}|\Phi|^{2}\left(\frac{s}{3}|\beta|^{2}+\frac{r^{2}(|\alpha|^{2}-r^{2})}{4}+\frac{r^{2}}{4}|\beta|^{2}\right)d\mu\leq0.\]
If we choose $r$ so that 
\[r\geq\sqrt{\frac{4|s|}{3}},\]
then we get 
\[\int_{M}|\Phi|^{2}|\alpha|^{2}d\mu\leq r^{2}\int_{M}|\Phi|^{2}d\mu.\]
Using the fact $|\Phi|^{2}$=$|\alpha|^{2}+|\beta|^{2}$, we have
\[\int_{M}|\alpha|^{2}|\alpha|^{2}d\mu\leq\int_{M}|\Phi|^{2}|\alpha|^{2}d\mu\leq r^{2}\int_{M}\left(|\alpha|^{2}+|\beta|^{2}\right)d\mu.\]
Thus, we get
\[\int_{M}\left(|\alpha|^{2}|\alpha|^{2}-r^{2}|\alpha|^{2}\right)d\mu\leq r^{2}\int_{M}|\beta|^{2}d\mu.\]
Using $F_{A}^{+}=\sigma(\Phi)+i\epsilon$ and $\langle\sigma(\Phi),\omega\rangle=i(\frac{|\alpha|^{2}-|\beta|^{2}}{2})$, we have 
\[|\alpha|^{2}-|\beta|^{2}=-2i\langle F_{A},\omega\rangle+4(\frac{s}{6}+\frac{r^{2}}{4}).\]

Since $c_{1}$ is represented by $[\frac{iF_{A}}{2\pi}]$ for any connection $A$, 
we have $iF_{A}=iF_{A_{0}}+d\gamma$ for the Blair-Taubes connection $A_{0}$ and for some 1-form $\gamma$. 
By (9), we get 
\[|\alpha|^{2}-|\beta|^{2}=-\frac{4s}{6}-2\langle d\gamma,\omega\rangle+4(\frac{s}{6}+\frac{r^{2}}{4}).\]
Thus, we have
\[|\alpha|^{2}-|\beta|^{2}-r^{2}=-2\langle d\gamma,\omega\rangle.\]
Since $\omega$ is a self-dual harmonic 2-form, by integrating, we have
\[\int_{M}\left(|\alpha|^{2}-r^{2}\right)d\mu=\int_{M}\left(|\beta|^{2}-2\langle\gamma,d^{*}\omega\rangle\right)d\mu=\int_{M}|\beta|^{2}d\mu.\]
Using this equality and $\int_{M}\left(|\alpha|^{2}|\alpha|^{2}-r^{2}|\alpha|^{2}\right)d\mu\leq r^{2}\int_{M}|\beta|^{2}d\mu$, we have 
\[\int_{M}\left(|\alpha|^{4}-r^{2}|\alpha|^{2}\right)d\mu\leq r^{2}\int_{M}\left(|\alpha|^{2}-r^{2}\right)d\mu.\]
This implies
\[\int_{M}(|\alpha|^{2}-r^{2})^{2}d\mu=\int_{M}\left(|\alpha|^{4}-r^{2}|\alpha|^{2}-r^{2}|\alpha|^{2}+r^{4}\right)d\mu\leq 0.\]
Thus $|\alpha|^{2}=r^{2}$ and from the equality $\int_{M}\left(|\alpha|^{2}-r^{2}\right)d\mu=\int_{M}|\beta|^{2}d\mu$, it follows  
that $\beta=0$.
Then up to gauge equivalence $\alpha=r$. 
Since the Dirac equation is invariant under the gauge transformation, we have 
\[D_{A}\Phi_{0}=0,\]
where $\Phi_{0}=(r,0)$.
Since $D_{A}\Phi_{0}=D_{A_{0}}\Phi_{0}+\frac{1}{2}\theta\cdot\Phi_{0}$, where $\cdot$ is Clifford multiplication, we get
\[\theta\cdot\Phi_{0}=0.\]
This implies that $\theta^{0,1}=0$.
On the other hand, since $\theta$ is a purely-imaginary 1-form,  we can conclude $\theta=0$. 
Thus, up to gauge transformation, we get the standard solution $(A_{0}, \Phi_{0})$. 
\end{proof}

\begin{Lemma}
Let $g$ be an almost-K\"ahler ASD metric and let $\epsilon$ be given by 
\[\epsilon=-(\frac{s}{6}+\frac{r^{2}}{4})\omega.\]
Then, $(g,\epsilon)$ is an excellent pair.
\end{Lemma}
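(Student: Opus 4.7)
The plan is to verify the two clauses of Definition 3: that $(g,\epsilon)$ is a good pair, and that $i\epsilon$ is a regular value for $\wp$. The first is a cohomological computation; the second is a transversality statement at the unique solution $(A_0,\Phi_0)$ supplied by Theorem 5.

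For the good pair clause, I would compute the harmonic projection of $\epsilon=-(\frac{s}{6}+\frac{r^{2}}{4})\omega$ onto $H^{+}(g)$. Since $\omega$ is harmonic by Lemma 1 and $r$ is constant, the term $-\frac{r^{2}}{4}\omega$ is itself harmonic with cohomology class $-\frac{r^{2}}{4}[\omega]$. For the term $-\frac{s}{6}\omega$, I would invoke the identity $iF_{A_{0}}^{+}=\frac{s}{6}\omega$ from equation (9), together with $[iF_{A_{0}}]=2\pi c_{1}$, to conclude that the harmonic projection of $\frac{s}{6}\omega$ represents $2\pi c_{1}^{+}$ in $H^{+}(g)$. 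Therefore $[\epsilon_{H}]=-2\pi c_{1}^{+}-\frac{r^{2}}{4}[\omega]$, and this differs from $-2\pi c_{1}^{+}$ because $[\omega]\neq 0$ in $H^{+}$ (indeed $[\omega]^{2}=2\,\mathrm{Vol}(M)>0$) and $r>0$.

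For the regular value clause, let $L$ denote the linearization of the Seiberg-Witten equations at $(A_{0},\Phi_{0})$, augmented by a Coulomb gauge condition. The expected dimension of the moduli space is $\frac{c_{1}^{2}-2\chi-3\tau}{4}=0$, using $c_{1}^{2}=2\chi+3\tau$ for any almost-complex structure, so $L$ is Fredholm of index $0$ and surjectivity is equivalent to $\ker L=0$. Given a kernel element $(a,\phi)$ with $\phi=(\alpha,\beta)\in\Lambda^{0,0}\oplus\Lambda^{0,2}$ and $a=i\theta$ for a real $1$-form $\theta$, the linearized equations read
\[d^{+}a=D\sigma_{\Phi_{0}}(\phi),\qquad D_{A_{0}}\phi+\tfrac{1}{2}a\cdot\Phi_{0}=0.\]
I would then mimic the Weitzenb\"ock argument of Theorem 5: apply the spinor Weitzenb\"ock identity to $\phi$, substitute $iF_{A_{0}}^{+}=\frac{s}{6}\omega$, feed in the linearized curvature equation to control $d^{+}a$ in terms of $\phi$, and exploit $r\geq\sqrt{4|s|/3}$ to obtain a nonnegative quadratic form forcing $\phi\equiv 0$. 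Once $\phi=0$, the linearized Dirac equation collapses to $a\cdot\Phi_{0}=0$, which via the Clifford identity $v\cdot(r,0)=r\sqrt{2}\,v^{0,1}$ gives $\theta^{0,1}=0$; reality of $\theta$ then forces $\theta=0$.

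The main obstacle is the linearized Weitzenb\"ock step. The bound $r\geq\sqrt{4|s|/3}$ was tuned in Theorem 5 to make a quartic integrand nonnegative in the nonlinear setting; one must verify that the same tuning yields a strictly positive quadratic form on the kernel pair $(a,\phi)$, with the coupling $\frac{r^{2}}{4}\omega$ supplying the required positivity to rule out nontrivial deformations.
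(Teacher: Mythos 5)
Your verification of the good-pair clause is fine (and is actually more explicit than the paper, which treats that condition separately in the following lemma); the problem is in the regular-value clause, where your plan contains both a false target and an unexecuted key step. First, with only a Coulomb gauge condition $d^{*}a=0$ imposed, the kernel of the linearization at an irreducible solution is \emph{never} zero: the residual constant gauge transformations give the element $\bigl(a,(u,\psi)\bigr)=\bigl(0,(ic,0)\bigr)$, $c\in\mathbb{R}$, which satisfies all three linearized equations. So "surjectivity is equivalent to $\ker L=0$" is wrong, and your proposed Weitzenb\"ock argument "forcing $\phi\equiv 0$" would be an attempt to prove a false statement. The correct criterion, which the paper reads off from its description (8) of the moduli space as a quotient by the one-dimensional group $U^{1}\rtimes H^{1}(M,\mathbb{Z})$, is that the kernel be exactly one-dimensional, and indeed the paper's kernel is spanned by a purely imaginary constant $u$ with $\theta=0$, $\psi=0$. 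Second, the step you yourself flag as "the main obstacle" --- that the tuning $r\geq\sqrt{4|s|/3}$ yields a positive quadratic form at the linearized level --- is precisely the content that a proof must supply, and you do not supply it; as written the proposal is a plan, not a proof.

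The paper avoids the linearized Weitzenb\"ock computation altogether by a different route: since the uniqueness theorem shows every solution of the nonlinear equations satisfies $|\alpha|^{2}=r^{2}$ and $\beta=0$, a kernel element $\bigl(\theta,(u,\psi)\bigr)$ is taken to satisfy the linearizations of these constraints at $\Phi_{0}=(r,0)$, namely $r(u+\bar{u})=0$ and $\psi=0$; hence $u$ is purely imaginary, and the linearized Dirac equation $\bar{\partial}u=C\theta^{0,1}$ combined with $d^{*}\theta=0$ gives $du=0$, $\theta=0$, leaving exactly the one-dimensional constant direction. If you want to salvage your approach, you must (i) replace $\ker L=0$ by $\dim\ker L=1$ and explicitly mod out the constant gauge direction, and (ii) actually carry out the linearized integral inequality; alternatively, adopt the paper's shortcut of feeding the nonlinear characterization of solutions into the linearized problem.
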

\begin{proof}
By theorem 7, there is a unique solution $(A_{0}, \Phi_{0})$ up to gauge equivalence. 
We show that this solution is nondegenerate.
This is equivalent to showing the map $\wp$ is surjective, where $\wp:\mathcal{M}^{*}_{g}\to F^{+}-\sigma(\Phi)$.
Let us consider the linearization of the following equations at $\left((A_{0}, \Phi_{0}), -(\frac{s}{6}+\frac{r^{2}}{4})\omega\right),$
\begin{equation}
d^{*}(A-A_{0})=0, D_{A}\Phi=0, F_{A}^{+}=\sigma(\Phi)+i\epsilon.
\end{equation}
Considering (8), the linearization $D\wp$ is onto if and only if the dimension of the kernel of (12) is 1.
Suppose $\left(\theta, (u,\psi)\right)$ belong to the kernel of $(12)$.
We saw that any solution $\Phi=(\alpha, \beta)$ of the equations $D_{A}\Phi=0$ and $F_{A}^{+}=\sigma({\Phi})+i\epsilon$ must be $|\alpha|^{2}=r^{2}, \beta=0$.  
Thus $(u,\psi)$ satisfies the linearization of the pair of the equations $|\alpha|^{2}=r^{2}$ and $\beta=0$. 
The linearization of these equations evaluated at $\Phi_{0}=(r, 0)$ are
\[r(u+\bar u)=0, \quad \psi=0.\]
Note that $u+\bar{u}=0$ implies that the real part of $u$ is zero. Thus, $u$ is purely imaginary. 
Also note that from the linearization of the Dirac equation, we get 
\[D_{A_{0}}(u)=-\frac{1}{2}\theta\cdot(r,0).\]
Since $D_{A_{0}}(u)=\sqrt{2}\bar{\partial}u$, we get
\[\bar{\partial}u=C\theta^{0,1},\]
where $C$ is an explicit constant. 
This implies that $du=\theta$. Since $d^{*}\theta=0$, we get $d^{*}du=0$. By taking an $L^{2}$ inner product with $u$, 
we can conclude $du=0$, and therefore, $u$ is constant and $\theta=0$. 
Since $u$ is purely imaginary and constant, we get the dimension of the kernel of $(12)$ is equal to 1. 
\end{proof}
\begin{Lemma}
Let $M$ be a smooth, compact 4-manifold with $b_{+}=1$ and assume $M$ admits a strictly almost-K\"ahler anti-self-dual metric $g$.
Then $(g,0)$ and $(g,\epsilon)$, where $\epsilon=-(\frac{s}{6}+\frac{r^{2}}{4})\omega$ are good pairs respectively and they can be path connected through good pairs, and therefore, they belong to the same path component. 
\end{Lemma}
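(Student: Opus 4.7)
The plan is to exploit the one-dimensionality of the space of self-dual harmonic 2-forms. Since $b_{+}=1$ and, by Lemma 1, $\omega$ is a nonzero self-dual harmonic 2-form, the class $[\omega]$ spans $H^{+}(g)$. Hence we may write $c_{1}^{+}=\lambda[\omega]$ for some real $\lambda$. Using that $\omega$ is self-dual we have $c_{1}\cdot[\omega]=c_{1}^{+}\cdot[\omega]=\lambda\int_{M}\omega\wedge\omega=\lambda\int_{M}|\omega|^{2}\,d\mu=2\lambda\,\mathrm{Vol}(M)$, so Lemma 7 (which gives $c_{1}\cdot[\omega]<0$) forces $\lambda<0$. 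In particular $c_{1}^{+}\neq0$, which already implies that $(g,0)$ is a good pair.

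Next, I would compute the harmonic part of $\epsilon=-(\frac{s}{6}+\frac{r^{2}}{4})\omega$ by orthogonal $L^{2}$-projection onto the line spanned by $\omega$. Writing $\epsilon_{H}=\mu\,\omega$, a direct computation using $|\omega|^{2}=2$ gives
\[
\mu=\frac{\langle\epsilon,\omega\rangle_{L^{2}}}{\|\omega\|_{L^{2}}^{2}}=-\frac{1}{\mathrm{Vol}(M)}\int_{M}\Bigl(\frac{s}{6}+\frac{r^{2}}{4}\Bigr)d\mu=-\frac{1}{6\,\mathrm{Vol}(M)}\int_{M}s\,d\mu-\frac{r^{2}}{4}.
\]
On the other hand the formula derived from $\lambda=\frac{c_{1}\cdot[\omega]}{2\,\mathrm{Vol}(M)}$ combined with the integrated identity (10) yields $-2\pi\lambda=-\frac{1}{6\,\mathrm{Vol}(M)}\int_{M}s\,d\mu$, so $\mu-(-2\pi\lambda)=-\frac{r^{2}}{4}$. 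Since $r>0$ this difference is strictly negative and in particular nonzero, verifying that $(g,\epsilon)$ is also a good pair.

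Finally, to connect the two good pairs I would use the straight-line path $t\mapsto(g,t\epsilon)$ for $t\in[0,1]$, whose harmonic part is $t\mu\,\omega$. I need to check that $t\mu\neq-2\pi\lambda$ for all $t\in[0,1]$. Since $g$ is strictly almost-K\"ahler ASD, Lemma 2 combined with the strict negativity of $s$ on a dense set gives $-2\pi\lambda>0$, whereas the identity $\mu=-2\pi\lambda-r^{2}/4$ shows $\mu<-2\pi\lambda$. If $\mu\geq 0$ then $t\mu\in[0,\mu]\subset[0,-2\pi\lambda)$; if $\mu<0$ then $t\mu\in[\mu,0]$, still strictly below $-2\pi\lambda>0$. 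Either way $t\mu<-2\pi\lambda$ throughout, so the entire path avoids the wall and the two good pairs lie in the same chamber.

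The substantive input is Lemma 7: without the strict inequality $c_{1}\cdot[\omega]<0$ the wall could coincide with the origin, and the linear homotopy could cross it. The rest is bookkeeping of the normalizations $|\omega|^{2}=2$ and of the sign conventions in the definition of $c_{1}^{+}$.
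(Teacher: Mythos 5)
Your proof is correct and follows essentially the same route as the paper: both arguments reduce to showing that $2\pi c_{1}^{+}$ is a negative multiple of $\omega$ (the paper reads this off directly from the harmonic part of the Blair--Taubes curvature $iF^{+}_{A_{0}}=\frac{s}{6}\omega$, while you route through Lemma 7 and identity (10), which encode the same fact), and then observing that the harmonic part of $t\epsilon$ stays strictly on one side of the wall $-2\pi c_{1}^{+}$ for all $t\in[0,1]$ because the extra term $-\frac{r^{2}}{4}\omega$ pushes in the direction away from it. The sign bookkeeping and the $L^{2}$-projection with $|\omega|^{2}=2$ are handled correctly, so no changes are needed.
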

\begin{proof}
We use the Blair-Taubes connection $A_{0}$ in order to get $2\pi c_{1}^{+}$.
Since $b_{+}=1$, we can think of $\frac{\omega}{\sqrt{2}}$ as a basis for $\mathcal{H}^{+}$.
Thus, the harmonic part of $iF_{A_{0}}^{+}$ is 
\[\left(\int_{M}\langle\frac{s\omega}{6}, \frac{\omega}{\sqrt{2}}\rangle d\mu\right)\frac{\omega}{\sqrt{2}}=\frac{s_{0}}{6}\omega,\]
where $s_{0}=\int s d\mu$. Since $g$ is a strictly almost-K\"ahler ASD metric, we have $s_{0}<0$.
In particular, this means $2\pi c_{1}^{+}\neq 0$. Thus, $(g,0)$ is a good pair. 
We claim, for $t\in(0,1)$, $(g, t\epsilon)$ is a good pair. 
Suppose not. Then there is $t_{0}\in[0,1]$ such that 
\[\frac{s_{0}}{6}\omega=t_{0}(\frac{s_{0}}{6}+\frac{r^{2}}{4})\omega.\]
Then we rewrite this as
\[(1-t_{0})\frac{s_{0}}{6}\omega=\frac{t_{0}r^{2}}{4}\omega.\]
Since $s_{0}<0$, we get a contradiction.  
\end{proof}

From this, it follows that
$(g,0)$ and $(g,\epsilon)$ belong to the same path component. Therefore, for the chamber which contains $(g,0)$, where $g$ is a strictly almost-K\"ahler ASD metric,
the SW invariant is non-zero.

\begin{Lemma}

[14]  Let $M$ be a compact, 4-dimensional manifold. 
Suppose $M$ admit an almost-complex structure and $b_{+}>0$ and assume $M$ admits a positive scalar curvature metric $\tilde{g}$. 
Then for the chamber whose closure contains $(\tilde{g}, 0)$, $n_{c}=0$. 
\end{Lemma}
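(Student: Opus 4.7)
The plan is to reduce the statement to the standard vanishing argument for Seiberg--Witten solutions on a manifold of positive scalar curvature, after a small perturbation that moves $(\tilde{g},0)$ to an excellent pair in the same chamber. First I would argue that, because the set of excellent pairs is open and dense in the set of good pairs within a given chamber, it suffices to exhibit an excellent pair $(\tilde{g},\epsilon)$ in the chamber of $(\tilde{g},0)$ for which the moduli space $\mathcal{M}^{*}_{\tilde{g}}$ is empty; then by definition $n_{c}(M,\tilde{g},\epsilon)=0$, which by the chamber-invariance recalled just before Definition 2 gives $n_{c}=0$ on the entire chamber.

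Next I would apply the Weitzenb\"ock formula already used in the proof of Theorem 7, namely
\[
D_{A}^{*}D_{A}\Phi = \nabla_{A}^{*}\nabla_{A}\Phi+\frac{\tilde{s}}{4}\Phi+\frac{F_{A}^{+}}{2}\cdot\Phi,
\]
where $\tilde{s}$ denotes the (positive) scalar curvature of $\tilde{g}$. Substituting the curvature equation $F_{A}^{+}=\sigma(\Phi)+i\epsilon$, taking the pointwise inner product with $\Phi$, and using the standard identity $\langle \sigma(\Phi)\cdot\Phi,\Phi\rangle=\tfrac{1}{2}|\Phi|^{4}$ (which can be verified componentwise in the $\mathbb{V}_{+}=\Lambda^{0,0}\oplus\Lambda^{0,2}$ decomposition used earlier), I obtain
\[
0=\int_{M}\left(|\nabla_{A}\Phi|^{2}+\frac{\tilde{s}}{4}|\Phi|^{2}+\frac{1}{4}|\Phi|^{4}+\frac{1}{2}\langle i\epsilon\cdot\Phi,\Phi\rangle\right)d\mu,
\]
for any solution $(A,\Phi)$.

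For $\epsilon=0$ the three bulk terms are nonnegative and the scalar curvature term is strictly positive wherever $\Phi\neq 0$, forcing $\Phi\equiv 0$. Now choose $\epsilon$ so small that $\tfrac{1}{2}|\langle i\epsilon\cdot\Phi,\Phi\rangle|\le \tfrac{1}{2}|\epsilon|\,|\Phi|^{2}<\tfrac{\tilde{s}_{\min}}{4}|\Phi|^{2}$ pointwise; then the same identity still forces $\Phi\equiv 0$, so there are no irreducible solutions. To finish, I need to verify that $(\tilde{g},\epsilon)$ can be chosen in the chamber of $(\tilde{g},0)$: since being a good pair is an open condition and the straight-line path $t\mapsto (\tilde{g},t\epsilon)$, $t\in[0,1]$, stays within good pairs for small $\epsilon$ (both endpoints satisfy the good-pair inequality with room to spare), the two lie in the same path component. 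An excellent pair is then obtained by a further generic perturbation keeping $\epsilon$ small, so that no solution exists; hence $n_{c}=0$ on this chamber.

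The main potential obstacle is the case where $(\tilde{g},0)$ itself fails to be a good pair, i.e.\ $-2\pi c_{1}^{+}$ is realized as the harmonic projection of $0$, which happens only when $c_{1}^{+}(\tilde{g})=0$. In that case I would replace $(\tilde{g},0)$ by a nearby $(\tilde{g},\epsilon_{0})$ with small but nonzero $\epsilon_{0}$ chosen so that the good-pair condition holds; the Weitzenb\"ock estimate above still applies provided $|\epsilon_{0}|<\tilde{s}_{\min}/2$, which is compatible with smallness, and the conclusion $n_{c}=0$ follows exactly as before.
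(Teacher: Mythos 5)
Your argument is correct: it is the standard Witten-type vanishing argument (Weitzenb\"ock formula plus positivity of $\tilde{s}$ rules out irreducible solutions for $\epsilon=0$ and for all sufficiently small $\epsilon$, and an empty moduli space makes the pair automatically excellent with $n_{c}=0$). The paper itself gives no proof of this lemma --- it is quoted from reference [14] --- and your write-up is exactly the proof that source supplies, with the same normalization $\tfrac12\langle\sigma(\Phi)\cdot\Phi,\Phi\rangle=\tfrac14|\Phi|^{4}$ that appears in the paper's own equation (11); your closing remark correctly handles the only delicate point, namely that $(\tilde{g},0)$ may lie on the wall, which is why the lemma speaks of the chamber whose \emph{closure} contains it.
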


Liu's theorem [18] tells us about symplectic manifolds which admits a positive scalar curvature metric. 
\begin{Theorem}
(Liu) Let $M$ be a smooth, compact, symplectic 4-manifold.
If $M$ admits a positive scalar curvature metric, then $M$ is diffeomorphic to either a rational or ruled surfaces. 
\end{Theorem}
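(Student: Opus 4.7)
The plan is to combine the obstruction to positive scalar curvature provided by Taubes' theory of Seiberg--Witten invariants on symplectic manifolds with the vanishing theorem for SW invariants under positive scalar curvature (Lemma 10). First I would fix a symplectic form $\omega$ and a compatible almost-complex structure $J$, producing the canonical Spin$^{c}$ structure $\mathfrak{s}_{K}$ with first Chern class $c_{1}(M,J)$. Taubes' foundational result (cited as [27]) asserts that $SW_{M}(\mathfrak{s}_{K}) = \pm 1$ on any closed symplectic 4-manifold, realized by the standard solution $(A_{0}, \Phi_{0})$ for sufficiently large perturbation coming from the symplectic form, exactly as used earlier in this paper.

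The first step is to rule out $b_{+}(M) > 1$. In that range the SW invariant is a genuine diffeomorphism invariant, independent of $(g,\epsilon)$. Choosing $g = \tilde{g}$ of positive scalar curvature and $\epsilon = 0$, Lemma 10 gives $n_{c}(M) = 0$, contradicting Taubes' $\pm 1$. Hence $b_{+}(M) = 1$.

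The second step handles the delicate case $b_{+}(M) = 1$, where SW depends on the chamber and there are exactly two chambers separated by the wall $[\epsilon_{H}] + 2\pi c_{1}^{+} = 0$. I would identify the ``PSC chamber'' containing $(\tilde{g}, 0)$, in which Lemma 10 forces vanishing, with the ``symplectic chamber'' containing Taubes' large-perturbation solution, and apply the wall-crossing formula of [14]. Combining wall-crossing with Taubes' SW${}={}$Gr correspondence yields that the canonical class satisfies $K_{M} \cdot [\omega] < 0$, i.e.\ the symplectic Kodaira dimension of $(M,\omega)$ is $-\infty$: otherwise, a pseudoholomorphic representative of $K_{M}$ would exist and give an obstruction inconsistent with the PSC vanishing.

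The final step invokes the classification of symplectic 4-manifolds of Kodaira dimension $-\infty$ due to Li--Liu, which asserts that any such manifold is symplectomorphic (hence diffeomorphic) to $\mathbb{CP}^{2}$ blown up, or to an $S^{2}$-bundle over a Riemann surface possibly blown up, that is, to a rational or ruled surface. The main obstacle is the second step: correctly organizing the chamber bookkeeping when $b_{+}=1$, passing from the wall-crossing relation to an honest Gromov-theoretic statement about $K_{M}$, and extracting the Kodaira dimension conclusion in a way that remains valid without any genericity assumption on $\omega$. The classification step is then essentially a black box from symplectic birational geometry.
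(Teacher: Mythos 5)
The paper does not actually prove this statement: it is quoted as Liu's theorem and used as a black box, with the proof residing entirely in the cited reference [18]. So there is no internal argument to compare yours against; what you have written is essentially a reconstruction of the strategy of [18] itself. Your first step ($b_{+}>1$: Taubes' nonvanishing of the Seiberg--Witten invariant of the canonical $\mathrm{Spin}^{c}$ structure against the vanishing forced by positive scalar curvature) is correct and complete as stated.

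The gap is exactly where you flag it, and it is not a minor bookkeeping issue. In the $b_{+}=1$ case you assert that ``combining wall-crossing with SW${}={}$Gr yields $K_{M}\cdot[\omega]<0$,'' but this is the entire content of Liu's argument and nothing in your sketch produces it. Two concrete difficulties: (i) deciding which chamber $(\tilde{g},0)$ lies in requires controlling the sign of $c_{1}^{+}\cdot\omega_{\tilde{g}}$ for the positive-scalar-curvature metric $\tilde{g}$, which has no a priori relation to the symplectic form $\omega$; and (ii) when $b_{1}(M)>0$ --- which is the case for all the ruled surfaces over $\Sigma_{\mathbf{g}}$ with $\mathbf{g}\geq 1$ appearing in the conclusion --- the naive wall-crossing count ``$0$ on one side versus $\pm 1$ on the other'' is not available, and one needs Liu's general wall-crossing formula (the very subject of [18]) to extract any conclusion. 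Finally, the last step is not folklore to be black-boxed from ``symplectic birational geometry'': the statement that $K_{M}\cdot[\omega]<0$ forces $M$ to be rational or ruled is itself the other main theorem of [18], building on McDuff's classification of symplectic 4-manifolds containing spheres of nonnegative square. As written, your proposal therefore reduces Liu's theorem to two theorems of Liu's own paper; that is a legitimate way to use the result, and it is how the present paper uses it, but it is not an independent proof.
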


\begin{Theorem}
Suppose a smooth, compact 4-manifold $M$ admits an almost K\"ahler ASD metric. 
If $M$ also admits a metric of positive scalar curvature, then it is diffeomorphic to one of the following:

\begin{itemize}
\item$\mathbb{CP}^{2}\#n\overline{\mathbb{CP}^{2}}$  for $n\geq 10$;
\item$S^{2}\times \Sigma_{\mathbf{g}}$ and non-trivial $S^{2}$-bundle over $\Sigma_{\mathbf{g}}$, where $\Sigma_{\mathbf{g}}$
 is a Riemann surface with genus $\mathbf{g}\geq 2$;
\item$(S^{2}\times \Sigma_{\mathbf{g}})\#n\overline{\mathbb{CP}^{2}}$ for $n\geq 1$; or
\item$(S^{2}\times T^{2})\#n\overline{\mathbb{CP}^{2}}$ for $n\geq 1$.
 \end{itemize}

\end{Theorem}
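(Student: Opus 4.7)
The plan is as follows. Since $g$ is almost-K\"ahler, $\omega$ endows $M$ with a symplectic structure, so by Liu's theorem (Theorem 8) and the assumed existence of a positive scalar curvature metric $\tilde g$, $M$ is diffeomorphic to a rational or ruled surface. Such a surface has $b_+(M)=1$ and is diffeomorphic to some blow-up of $\mathbb{CP}^2$ or of an $S^2$-bundle over a Riemann surface $\Sigma_{\mathbf{g}}$. The argument then splits according to whether $g$ is scalar-flat K\"ahler or strictly almost-K\"ahler ASD.

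Suppose first that $g$ is strictly almost-K\"ahler ASD. Theorem 7 produces a unique gauge-equivalence class of solutions of the perturbed Seiberg-Witten equation for the pair $(g,\epsilon)$ with $\epsilon=-(\frac{s}{6}+\frac{r^{2}}{4})\omega$ and $r$ sufficiently large, and Lemma 8 shows that $(g,\epsilon)$ is an excellent pair, so $n_c(M,g,\epsilon)\equiv 1\pmod 2$. By Lemma 9, $(g,\epsilon)$ can be joined through good pairs to $(g,0)$, so these lie in the same chamber. On the other hand, by Lemma 10 the chamber containing the PSC pair $(\tilde g,0)$ satisfies $n_c\equiv 0\pmod 2$. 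When $c_1^{2}(M)\geq 0$, the class $c_1$ lies in the closed positive cone of $H^{2}(M,\mathbb{R})$; since $b_+=1$, the self-dual harmonic representative of any metric lies in a fixed component of the open positive cone, so the sign of $c_1\cdot\omega_g^{+}$ is independent of $g$. Thus $(g,0)$ and $(\tilde g,0)$ lie in the same chamber, producing a contradiction with the two computations of $n_c$. Hence $c_1^{2}(M)<0$, and a direct enumeration shows that every rational or ruled surface with $c_1^{2}<0$ and $b_+=1$ appears in the theorem's list.

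Now suppose $g$ is scalar-flat K\"ahler (so Lemma 9 is unavailable, since the argument there uses $s_0<0$). Then $c_1\cdot[\omega]=0$, so by Yau's theorem and Remark 1 either $c_1=0$, in which case $M$ is finitely covered by a $K3$ surface or a complex torus, or $c_1\neq 0$ with $c_1^{2}<0$. The first alternative is ruled out because $K3$, $T^{4}$, Enriques, and hyperelliptic surfaces all carry nontrivial Seiberg-Witten invariants, and therefore admit no metric of positive scalar curvature. Thus again $c_1^{2}<0$, and the same enumeration of rational or ruled surfaces exhibits $M$ as one of the manifolds listed.

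The main technical point I anticipate is the chamber comparison in Case~1 when $b_+=1$. It is painless if $c_1^{2}>0$, but it requires extra care in the degenerate case $c_1^{2}=0$ (covering $\mathbb{CP}^{2}\#9\overline{\mathbb{CP}^{2}}$, $S^{2}\times T^{2}$, and the non-trivial $S^{2}$-bundle over $T^{2}$), where $c_1$ sits on the light cone. There one must verify that even though $c_1$ lies on the boundary of the positive cone, the pairing $c_1\cdot\omega_g^{+}$ is nonzero and has the same sign for every Riemannian metric $g$, so that $(g,0)$ and $(\tilde g,0)$ still belong to a common chamber and the Seiberg-Witten contradiction goes through.
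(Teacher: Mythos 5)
Your proposal is correct and follows essentially the same route as the paper: Liu's theorem to reduce to rational or ruled surfaces, then the Seiberg--Witten chamber comparison (Theorem 7, Lemmas 8--10) to force $c_{1}^{2}<0$ in the strictly almost-K\"ahler case, and Yau's theorem with Remark 1 in the scalar-flat K\"ahler case. The only cosmetic differences are that you rule out $c_{1}=0$ via the nontrivial Seiberg--Witten invariants of $K3$ and $T^{4}$ where the paper simply observes these are absent from Liu's list, and you spell out the $c_{1}^{2}=0$ boundary case of the chamber argument, which the paper handles by citing the fact that $c_{1}\neq 0$ and $c_{1}^{2}\geq 0$ already force $c_{1}^{+}\neq 0$ for every metric.
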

\begin{proof}

By Liu's theorem, $M$ is diffeomorphic to a rational or ruled surfaces. 
Suppose $g$ be a strictly almost-K\"ahler ASD metric. 
Then the scalar curvature $s$ is negative somewhere. Then $c_{1}\neq 0$
and $b_{+}=1$. We show $c_{1}^{2}<0$.
If $c_{1}^{2}\geq 0$, then all pairs $(g,0)$ is a good pair and belong to the same chamber. 
However, by Lemma 10, for the chamber which contains $\tilde{g}$, a metric of positive scalar curvature, $n_{c}=0$. 
On the other hand, for the chamber which contains $(g,0)$, where $g$ is a strictly almost-K\"ahler ASD metric, $n_{c}=1$. 
Thus, we get a contradiction and therefore, $c_{1}^{2}\geq 0.$
Thus, we get the conclusion in this case. 
Note that by Gromov-Lawson [9],  all of these examples admit a metric of positive scalar curvature.
 
Suppose $g$ be a scalar-flat K\"ahler metric. 
Then by Yau's theorem, $M$ has either $c_{1}=0$, or it is a ruled surface. 
Suppose $c_{1}\neq 0$. 
Then, from Remark 1, we get the conclusion in this case. 

Suppose $M$ admits a scalar-flat K\"ahler metric and $c_{1}=0$.
Then, universal cover of $M$ is to either $T^{4}$ or $K3$. 
However, these do not belong to the list of Liu's theorem. 
Thus, $c_{1}\neq0$.
\end{proof}

\begin{Theorem}
Suppose $\mathbb{CP}^{2}\#n\overline{\mathbb{CP}^{2}}$ admits an almost-K\"ahler ASD metric.
Then $n\geq 10$.
\end{Theorem}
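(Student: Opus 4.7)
The plan is to split into two cases according to whether the almost-K\"ahler ASD metric $g$ on $M=\mathbb{CP}^{2}\#n\overline{\mathbb{CP}^{2}}$ is scalar-flat K\"ahler or strictly almost-K\"ahler ASD, and in both cases to deduce $c_{1}^{2}<0$, which gives $9-n=2\chi+3\tau<0$, i.e.\ $n\geq 10$.

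\emph{Scalar-flat K\"ahler case.} Here I would simply invoke Remark~1. The manifold $\mathbb{CP}^{2}\#n\overline{\mathbb{CP}^{2}}$ is simply connected and not covered by $T^{4}$ or a $K3$ surface, so Yau's theorem (Theorem 3) together with $c_{1}\cdot[\omega]=0$ rules out $c_{1}=0$. Since $\omega$ is a self-dual harmonic $2$-form and $[\omega]\cdot c_{1}=0$, and since $b_{+}(M)=1$, the class $c_{1}$ must lie in $H^{-}$, giving $c_{1}^{2}<0$.

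\emph{Strictly almost-K\"ahler ASD case.} This is where the Seiberg-Witten machinery built up in the paper is used. By Lemma~7, $c_{1}\cdot[\omega]<0$, so in particular $c_{1}\neq 0$, and by Lemma~2 we have $b_{+}(M)=1$. On one hand, Gromov-Lawson provides a positive scalar curvature metric $\tilde{g}$ on $M$; by Lemma~10, for the chamber whose closure contains $(\tilde{g},0)$ the Seiberg-Witten invariant $n_{c}$ vanishes. On the other hand, by Theorem~7, Lemma~8, and Lemma~9, the pair $(g,0)$ lies in the same path component of good pairs as the excellent pair $(g,\epsilon)$ with $\epsilon=-(\tfrac{s}{6}+\tfrac{r^{2}}{4})\omega$, and this latter pair admits exactly one solution of the Seiberg-Witten equation up to gauge; hence $n_{c}=1$ for the chamber containing $(g,0)$.

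\emph{Combining the cases.} The crux is that in the strictly almost-K\"ahler ASD case these two chambers must be distinct. If instead $c_{1}^{2}\geq 0$ (with $c_{1}\neq 0$), then as recalled just before Definition~1, the pair $(h,0)$ is a good pair for every metric $h$, so all such pairs lie in a single chamber; this would force $0=n_{c}(\tilde{g},0)=n_{c}(g,0)=1$, a contradiction. Therefore $c_{1}^{2}<0$, i.e.\ $9-n<0$, proving $n\geq 10$. The only genuinely delicate point is verifying the chamber comparison --- specifically that $(g,0)$ and $(g,\epsilon)$ lie in the same chamber (already done in Lemma~9) --- and checking that $M$ carries a positive scalar curvature metric; both are supplied by the preceding results, so no new analytic input is needed.
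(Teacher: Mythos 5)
Your proposal is correct and follows essentially the same route as the paper: the paper also argues that if $c_{1}^{2}=9-n\geq 0$ then all pairs $(h,0)$ lie in one chamber, so the positive-scalar-curvature metric forces $n_{c}=0$ while a strictly almost-K\"ahler ASD metric would force $n_{c}=1$ via Theorem~7 and Lemmas~8--9, leaving only the scalar-flat K\"ahler case, which is excluded by Remark~1. Your reorganization into two explicit cases is just a repackaging of the paper's contradiction argument, with the same key inputs.
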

\begin{proof}
In this case, $c_{1}\neq 0$. If $n\leq 9$, then $c_{1}^{2}\geq 0$ and therefore, all the pairs $(g,0)$ belong to the same chamber.
Since these manifolds admit a positive scalar curvature metric, it follows that any almost-K\"ahler anti-self-dual metric is scalar-flat K\"ahler.
Then, we have $c_{1}^{2}<0$, which is a contradiction.
Thus,  $\mathbb{CP}^{2}\#n\overline{\mathbb{CP}^{2}}$ for $n\leq 9$
does not admit an almost-K\"ahler anti-self-dual metric. 
\end{proof}

\newpage

\renewcommand{\refname}{Bibliography}

\end{document}